\newtheorem{prop}{Proposition}[subsection]
\newtheorem{theo}[prop]{Théor\`eme}
\newtheorem*{theo**}{Théorème}
\newtheorem{coro}[prop]{Corollaire}
\newtheorem*{conj*}{Conjecture}
\newtheorem{lemm}[prop]{Lemme}
\newtheorem{lemm*}{Lemme}[prop]
\theoremstyle{definition}
\newtheorem{vide}[prop]{}
\newtheorem{defi}[prop]{Définition}
\newtheorem*{defi*}{Définition}
\theoremstyle{remark}
\newtheorem{rema}[prop]{Remarques}
\newtheorem{nota}[prop]{Notations}
\numberwithin{equation}{prop}
\newcommand{\riso}{ \overset{\sim}{\longrightarrow}\, }
\newcommand{\liso}{ \overset{\sim}{\longleftarrow}\, }
\newcommand{\Spf}{\mathrm{Spf}\,}
\renewcommand{\sp}{\mathrm{sp}}
\newcommand{\FF}{{\mathcal{F}}}
\newcommand{\E}{{\mathcal{E}}}
\newcommand{\G}{{\mathcal{G}}}
\renewcommand{\H}{{\mathcal{H}}}
\newcommand{\D}{{\mathcal{D}}}
\newcommand{\I}{{\mathcal{I}}}
\newcommand{\PP}{{\mathcal{P}}}
\renewcommand{\O}{{\mathcal{O}}}
\newcommand{\V}{\mathcal{V}}
\renewcommand{\S}{\mathcal{S}}
\newcommand{\Y}{\mathcal{Y}}
\newcommand{\ZZ}{\mathcal{Z}}
\newcommand{\X}{\mathfrak{X}}
\newcommand{\U}{\mathfrak{U}}
\newcommand{\A}{\mathbb{A}}
\renewcommand{\P}{\mathbb{P}}
\newcommand{\DD}{\mathbb{D}}
\renewcommand{\L}{\mathbb{L}}
\newcommand{\R}{\mathbb{R}}
\newcommand{\Q}{\mathbb{Q}}
\newcommand{\Z}{\mathbb{Z}}
\newcommand{\N}{\mathbb{N}}
\newcommand{\hdag}{  \phantom{}{^{\dag} }    }
\begin{document}
\title{Stabilité de l'holonomie sur les variétés quasi-projectives}
\author{Daniel Caro \footnote{L'auteur a bénéficié du soutien du réseau européen TMR \textit{Arithmetic Algebraic Geometry}
(contrat numéro UE MRTN-CT-2003-504917).}}
\date{}

\maketitle

\begin{abstract}
\selectlanguage{english}
Let $\V$ be a mixed characteristic complete discrete valuation ring with perfect residue field $k$.
We solve Berthelot's conjectures on the stability of the holonomicity over smooth projective formal $\V$-schemes.
Then we build a category of complexes of arithmetic $\D$-modules over quasi-projective $k$-varieties with bounded, $F$-holonomic cohomology. We get its stability under Grothendieck's six operations.

\end{abstract}

\selectlanguage{frenchb}

\tableofcontents

\section*{Introduction}

Afin d'obtenir une cohomologie $p$-adique sur les variétés algébriques sur un corps de caractéristique $p$
stable par les six opérations cohomologiques de Grothendieck (i.e., image directe, image directe extraordinaire,
image inverse, image inverse extraordinaire, produit tensoriel, foncteur dual),
Berthelot a construit une version arithmétique de la théorie des modules sur le faisceau des opérateurs différentiels 
(voir l'introduction
\cite{Beintro2} puis dans l'ordre \cite{Be0}, \cite{Be1}, \cite{Be2}).
Rappelons quelques éléments de sa théorie :
soient $\V$ un anneau de valuation discret complet d'inégales caractéristiques $(0,p)$,
de corps résiduel $k$ supposé parfait et de corps de fraction $K$,
$\PP$ un $\V$-schéma formel lisse et $P$ sa fibre spéciale.
Dans la version arithmétique de Berthelot, le faisceau des opérateurs différentiels usuels $\D$ est remplacé par $\D ^{\dag} _{\Q}$.
Plus précisément, il construit le faisceau sur $\PP$ des opérateurs différentiels de niveau fini et d'ordre infini
noté $\D ^\dag _{\PP,\Q}$ ; ce dernier correspondant à la tensorisation par $\Q$ (indiqué par l'indice $\Q$) du complété faible $p$-adique (indiqué par le symbole {\og $\dag$\fg})
du faisceau classique $\D  _{\PP}$ des opérateurs différentiels sur $\PP$.
On dispose de plus de la notion de $F\text{-}\D ^\dag _{\PP,\Q} $-module, i.e.,
de $\D ^\dag _{\PP,\Q} $-module $\E$
muni d'une structure de Frobenius, i.e. d'un isomorphisme $\D ^\dag _{\PP,\Q} $-linéaire
de la forme $F ^* (\E) \riso \E$ avec $F^*$ désignant l'image inverse par l'endomorphisme (ou une puissance) du Frobenius absolu 
$P\to P$.
Il a aussi obtenu une notion de $F\text{-}\D ^\dag _{\PP,\Q} $-module holonome
en s'inspirant du cas classique : un $F\text{-}\D ^\dag _{\PP,\Q} $-module cohérent est holonome
s'il est nul ou si la dimension de sa variété caractéristique est égale à la dimension de $P$.
Il conjecture surtout que les $F$-complexes de $\D ^\dag _{\PP,\Q} $-modules à cohomologie bornée et holonomes
sont stables par opérations cohomologiques (de manière analogue à ce qui se passe dans la théorie classique).
Malgré la résolution de cette stabilité dans quelques cas importants (e.g., image inverse extraordinaire par un morphisme lisse, produit tensoriel externe, foncteur dual), voici ce qu'il restait à établir : 
la stabilité de l'holonomie par produit tensoriel interne,
image directe et image inverse extraordinaire 
 (voir les conjectures \cite[6.3.6]{Beintro2}).
Pour bénéficier de coefficients $p$-adiques stables,
une autre approche a été de définir les $\D$-modules arithmétiques surholonomes
(\cite{caro_surholonome}).
 D'après Caro et Tsuzuki (voir \cite{caro-Tsuzuki-2008}),
 les $F$-complexes de $\D$-modules arithmétiques surholonomes
 sont stables par les six opérations cohomologiques de Grothendieck.
 De plus, les $F$-complexes de $\D$-modules arithmétiques surholonomes sont holonomes. 
 L'égalité entre ces deux notions équivaut d'ailleurs à la validation des conjectures ci-dessus sur la stabilité de l'holonomie (voir \cite[8.2]{caro_surholonome})).
Ce travail s'attaque à ce problème dans le cas où le $\V$-schéma formel lisse (dans lequel se plonge une $k$-variété où vivent véritablement les complexes de $\D$-modules arithmétiques) peut être choisi projectif. 

Abordons à présent le contenu de ce travail.
  Soient $\PP$ un $\V$-schéma formel propre et lisse,
  $H _0$ un diviseur ample de $P _0$, $\mathfrak{A}$ l'ouvert de $\PP$ complémentaire de $H _0$.
  En fait, comme le diviseur est ample, il ne coûte pas grand chose de supposer que $\PP$ est le complété $p$-adique de l'espace projectif sur $\V$
et $H _{0}$ est un hyperplan défini par une des coordonnées canoniques.
  
Soit $\E$ un $\D ^\dag _{\PP} (\hdag H _0) _\Q$-module cohérent. 
Le résultat principal de la première partie de ce travail (voir \ref{pass-cvasurcv} et la preuve de \ref{conjD-proj}) est que, quitte à rajouter des singularités surconvergentes (i.e. à augmenter $H _{0}$), $\E$ est associé à un isocristal surconvergent sur $Y$, 
où $Y$ est un ouvert affine et lisse (dense dans une composante irréductible)
du support de $\E$.
Remarquons que, pour donner un sens à ce résultat, la structure de Frobenius est superflue grâce aux récents travaux (\cite{caro-pleine-fidelite}).
Cela nous a permis d'améliorer et surtout de simplifier (par rapport à une version antérieure prépubliée de ce travail qui n'utilisait pas \cite{caro-pleine-fidelite} car écrite avant) la preuve de ce résultat \ref{pass-cvasurcv} car on ne doit plus se préoccuper des compatibilités à Frobenius dans les constructions. 
Le premier ingrédient technique permettant d'éviter le problème du support de $\E$ non lisse
(la stabilité de la surholonomie nous permettait d'utiliser le théorème
de désingularisation de de Jong ; cette méthode ne fonctionne plus a priori et il faut donc innover)
est l'utilisation des opérateurs différentiels {\og à la Mebkhout-Narvaez\fg}
(voir \cite{Mebkhout-Narvaez-Macarro_D}) et le théorème de comparaison de Noot-Huyghe
(qui n'est validé a priori que pour les $\V$-schémas formels projectifs lisses).
C'est l'unique raison technique pour laquelle nous nous sommes restreint aux $\V$-schémas formels projectifs et lisses.
Le second ingrédient technique dû à Kedlaya (voir \cite{Kedlaya-coveraffinebis}) qui nous permet de nous ramener au cas déjà traité où le support de $\E$ est lisse (voir \cite{caro-construction}) est le fait que tout point fermé de $\PP$
possède un voisinage ouvert qui soit fini et étale sur un espace affine (ce dernier possède de façon remarquable une compactification lisse).

Soit $ \E \in F \text{-}D ^\mathrm{b} _\mathrm{coh} (\D ^\dag _{\PP} (\hdag H _0) _\Q)$.
  Nous prouvons dans le deuxième chapitre de ce papier (voir \ref{conjD-proj}) que
  si   
  $\E |\mathfrak{A} \in F \text{-}D ^\mathrm{b} _\mathrm{hol} (\D ^\dag _{\mathfrak{A},\Q})$
  alors
   $\E \in F \text{-}D ^\mathrm{b} _\mathrm{surhol} (\D ^\dag _{\PP,\Q})$, i.e., $\E$ est un $F$-complexe surholonome s'il est holonome en dehors des singularités surconvergentes. Ainsi, dans le cas d'un diviseur ample, cela répond positivement à la conjecture la plus forte de Berthelot de \cite[6.3.6]{Beintro2}. 
Donnons une esquisse de sa preuve : l'idée est de procéder par récurrence sur la dimension du support noté $X$ de $\E$. On peut en outre supposer que $\E$ est un module. 
Or, d'après la première partie, il existe un ouvert affine et lisse $Y$ (dense dans une composante irréductible) de $X$ tel que 
$\E$ soit associé à un isocristal surconvergent sur $Y$.
Comme les $F$-isocristaux surconvergents sur les $k$-variétés lisses sont surholonomes (voir \cite{caro-Tsuzuki-2008}), nous concluons la récurrence. 
Remarquons de plus que, comme il existe des isocristaux surconvergents sur des $k$-variétés lisses qui ne sont pas cohérents ni a fortiori surholonomes, 
le théorème \ref{conjD-proj} énoncé ci-dessus est faux sans structure de Frobenius (voir \ref{ss-Frob-rema}).
Nous avons donc besoin in fine de la structure de Frobenius pour valider la récurrence. 
Enfin, notons que la preuve de la surholonomie des $F$-isocristaux surconvergents de \cite{caro-Tsuzuki-2008} utilise notamment la stabilité de la surholonomie par image directe par un morphisme propre, ce qui empêche a priori de vérifier directement (i.e. sans la notion de surholonomie) 
l'holonomie des $F$-isocristaux surconvergents sur les $k$-variétés lisses.

Le théorème \ref{conjD-proj} implique que l'image directe (resp. l'image inverse extraordinaire) par un morphisme de 
$\V$-schémas formels projectifs et lisses  d'un $F$-complexe holonome est un $F$-complexe holonome. 
Cette stabilité nous permet de construire les complexes à cohomologie bornée et $F$-holonome
de $\D$-modules arithmétiques sur les $k$-variétés quasi-projectives.
Comme c'est le cas pour les complexes surholonomes, on obtient ensuite la stabilité par les six opérations cohomologiques de Grothendieck
de cette catégorie de coefficients $p$-adiques. 

\bigskip

{\bf Notations.}
Soient $\V$ un anneau de valuation discret complet d'inégales caractéristiques $(0,p)$, 
$\pi$ une uniformisante, 
$k$ son corps résiduel supposé parfait, $K$ son corps de fraction.
On fixe $s$ un entier et $\sigma \,:\, \V \riso \V$ un relèvement de la puissance $s$-ième de Frobenius.
Enfin, $m$ désignera par défaut un entier positif.

$\bullet$ Si $M$ est un $\V$-module (resp. un groupe abélien),
on pose $M _K := M \otimes _\V K$ (resp. $M _\Q=M \otimes _\Z \Q$).
Un module est par défaut un module à gauche.

$\bullet$
Pour tout $\V$-schéma formel faible lisse $X ^\dag $, 
on désigne par $X _{0}$ sa fibre spéciale, $\X$ le $\V$-schéma formel déduit par complétion $p$-adique. 
On note $\D ^{(m)} _{X ^\dag}$,
le faisceau des opérateurs différentiels d'ordre fini et de niveau $m$ sur $X ^\dag$ (e.g., voir \cite[2]{caro_devissge_surcoh}).
Si $t _{1}, \dots, t _{d}$ sont des coordonnées locales de $X ^{\dag}$ et $\partial _{1}, \dots, \partial _{d}$ sont les dérivations
correspondantes, on désignera comme d'habitude (voir \cite{Be1})
la base canonique de $\D ^{(m)} _{X ^\dag}$ par
$\underline{\partial} ^{<\underline{k}> _{(m)}}$.

Pour tout ouvert affine $U ^\dag $ de $X ^\dag$,
on pose $D ^{(m)} _{U ^\dag}=\Gamma (U^\dag , \D ^{(m)} _{X ^\dag})$ et 
$D _{U ^\dag,K}=\Gamma (U^\dag , \D ^{(m)} _{X ^\dag}) _{K}$ (qui ne dépend canoniquement pas de $m$).
On note $D ^{(m)\dag} _{U ^\dag}$ la complétion $p$-adique faible comme $\V$-algèbre de $D ^{(m)} _{U ^\dag}$.
Enfin, on pose
$D ^{\dag} _{U ^\dag} :=
\underset{\longrightarrow}{\lim}\, _m D ^{(m) \dag} _{U ^\dag}$.

$\bullet $ Pour tout $\V$-schéma formel lisse $\X$, 
on note $X _{0}$ sa fibre spéciale.
Pour tout ouvert affine $\U$ de $\X$,
on pose
$\widehat{D} ^{(m)} _{\U}:= \Gamma (\U, \widehat{\D} ^{(m)} _{\U})$,
$D ^\dag _{\U}:= \Gamma (\U, \D ^\dag _{\X})$.

$\bullet$ Si l'entier $n$ ne pose aucune ambiguïté,
on écrira plus simplement $D _{K}$ (resp. $D ^{(m)\dag} $, resp. $D ^\dag $, resp. $\widehat{D} ^{(m)}$) à la place de
$D _{\A ^{n\dag} _\V ,K}$
(resp. $D ^{(m)\dag} _{\A ^{n\dag} _\V}$, resp. $D ^{\dag} _{\A ^{n\dag} _\V}$, resp. $\widehat{D} ^{(m)} _{\widehat{\A} ^{n} _\V}$).

$\bullet$ Nous reprenons les notations usuelles des opérations cohomologiques que nous ne rappelons pas (e.g., voir \cite{Beintro2} ou le premier chapitre de \cite{caro_surholonome}).

\section{Caractérisations des isocristaux surconvergents sur les schémas finis étales sur l'espace affine}
Soit $X ^{\dag}$ un $\V$-schéma formel faible lisse. Grâce à Kedlaya (voir \cite{Kedlaya-coveraffine} ou \cite{Kedlaya-coveraffinebis}), 
il existe un ouvert affine dense $U ^{\dag}$ de $X ^{\dag}$ et un morphisme fini et étale de la forme $g\,:\,U^\dag \rightarrow \A ^{n\dag} _\V$.
Nous étudions dans cette section les opérateurs différentiels de Mebkhout-Narvaez  (voir la fin de \cite{Mebkhout-Narvaez-Macarro_D} et les théorèmes de comparaison de Noot-Huyghe avec la théorie de Berthelot des $\D$-modules arithmétiques de \cite{huyghe-comparaison}) sur un tel $U ^{\dag}$. Nous en déduirons une description des isocristaux surconvergents sur $U ^{\dag}$ (voir \ref{Eq-isoc-Ddag}).
Après ce travail préliminaire, 
ce chapitre culminera avec le théorème \ref{pass-cvasurcv} qui donne une condition suffisante pour qu'un $\D$-module arithmétique soit un isocristal surcohérent (voir \cite[6]{caro-2006-surcoh-surcv} et pour la version sans structure de Frobenius \cite{caro-pleine-fidelite}). Grosso modo, sur certains sous-schémas de l'espace projectif, dans la définition de la notion d'isocristal surcohérent, on peut affaiblir les hypothèses en remplaçant 
surcohérent par cohérent. 

\subsection{Compléments sur les opérateurs différentiels de Mebkhout-Narvaez}

Nous vérifions ici que le foncteur section globale appliqué au faisceau des opérateurs différentiels de Mebkhout-Narvaez commutent à l'extension 
d'un morphisme fini étale sur un espace affine (voir \ref{AotimeDdag=DdagU}). 
On en déduit facilement \ref{oub-com-otimes}. Ces résultats nous serviront pour valider la caractérisation \ref{pre-Eq-isoc-Ddag} des isocristaux surconvergents sur les schémas finis étales sur l'espace affine ou pour établir
le théorème \ref{pass-cvasurcv} de la section suivante.

Rappelons la définition de la complétion $p$-adique faible donnée par Noot-Huyghe dans \cite[1.3]{huyghe-comparaison}
dans le cas d'une $\V$-algèbre non nécessairement commutative:
\begin{defi}
\label{Adagnoncom}
  Pour tout entier $N$,
on note $B _N$ l'algèbre des polynômes à $N$ indéterminées sur $\V$ non commutative
(i.e. la $\V$-algèbre tensorielle de $\V ^N$).
Soit $A$ une $\V$-algèbre non nécessairement commutative.
On note $\widehat{A}$ la complétion $p$-adique de $A$ et
$A ^\dag$ le sous-ensemble de $\widehat{A}$ des éléments $z$
tels qu'il existe une constante $c\in \R $,
des éléments $x_1, \dots,x _n \in A$ et, pour tout $j\in \N$,
des polynômes $P _j \in \pi ^j B _n$ tels que
$\deg P _j \leq c (j+1)$ et
\begin{equation}
  \notag
  z = \sum _{j\in \N } P _j (x _1,\dots, x_n).
\end{equation}
L'ensemble $A ^\dag$ est une sous-$\V$-algèbre de $\widehat{A}$
et se nomme {\og la complétion $p$-adique faible de $A$ en tant que $\V$-algèbre\fg}
ou {\og la complétion $p$-adique faible de $A$\fg} s'il n'y a aucune ambiguïté sur l'algèbre de base $\V$
(dans notre travail, ce sera toujours sur $\V$).
On dira aussi que
{\og $z$ est engendré de manière faiblement complète sur $\V$ par
les éléments $x _1,\dots, x _n$ de $A$\fg}.

\end{defi}

\begin{rema}
\label{remavpi=j}
  Avec les notations de \ref{Adagnoncom}, supposons $c \geq 1$.
  Notons $R _0 =0$ et, pour tout $j\in \N$,
  définissons par récurrence sur $j\geq 0 $ les polynômes $Q _j$ et $R _{j+1}$ en posant :
  $R _j + P _j = Q _j + R _{j+1}$ où
  $Q _j$ désigne la somme des monômes de $R _j + P _j $ dont la valuation $\pi$-adique du
  coefficient vaut $j$ tandis que $R _{j+1}$ est la somme des autres termes.
  On dispose de l'égalité $z = \sum _{j\in \N } Q _j (x _1,\dots, x_n)$ avec
  $\deg Q _j \leq c (j+1)$ et tous les monômes de $Q _j $
ont une valuation $\pi$-adique égale à $j$
(on remarque aussi que $\deg R _{j+1} \leq c (j+1)$ et
  $R _{j+1} \in \pi ^{j+1} B _n$).
\end{rema}

\begin{lemm}
\label{Adagest-fc}
  Soient $A$ une $\V$-algèbre, $c\in \R$,
  $y _1, \dots, y _n \in A ^\dag$,
  et, pour tout $j\in \N$,
des polynômes $P _j \in \pi ^j B _n$ tels que
$\deg P _j \leq c (j+1)$.
Alors l'élément $z : =\sum _{j\in \N } P _j (y _1,\dots, y_n) $ appartient à $A ^\dag$.
Plus précisément, si $y _1, \dots, y_n$ sont engendrés de manière faiblement complète sur $\V$
par des éléments $ x _1,\dots, x _m$ de $A$,
alors $z$ est engendré de manière faiblement complète sur $\V$ par $ x _1,\dots, x _m$ .
En particulier, on dispose de l'égalité $A ^\dag=(A ^\dag)^\dag$.
\end{lemm}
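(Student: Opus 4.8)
The plan is to prove the refined assertion (that $z$ is generated in a weakly complete way over $\V$ by $x_1,\dots,x_m$), from which the first claim and the equality $A^\dag=(A^\dag)^\dag$ both follow formally. For the first claim, each $y_i\in A^\dag$ is by Definition \ref{Adagnoncom} weakly complete over some finite family of elements of $A$; gathering all these families into a single list $x_1,\dots,x_m$ and taking a common constant, every $y_i$ becomes weakly complete over $x_1,\dots,x_m$, and the refined statement then places $z$ in $A^\dag$. For the equality, the inclusion $A^\dag\subseteq(A^\dag)^\dag$ is formal (any element is weakly complete over itself), while $(A^\dag)^\dag\subseteq A^\dag$ is exactly the first claim applied to an arbitrary element $\sum_j P_j(y_1,\dots,y_n)$ of $(A^\dag)^\dag$ with $y_i\in A^\dag$.

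To prove the refined statement, I first reduce to the case $c\geq 1$ (replacing $c$ by $\max(c,1)$ preserves all degree bounds), and fix via Definition \ref{Adagnoncom}, possibly normalized as in \ref{remavpi=j}, a presentation $y_i=\sum_{l\in\N}P_{i,l}(\underline{x})$ with $P_{i,l}\in\pi^l B_m$ and $\deg P_{i,l}\leq c'(l+1)$, where $c'\geq 1$ is a common constant and $\underline{x}=(x_1,\dots,x_m)$. The idea is to substitute these series into $z=\sum_j P_j(\underline{y})$, with $\underline{y}=(y_1,\dots,y_n)$, and to expand the result into $\V$-monomials in $\underline{x}$. One such monomial is produced by the data of an index $j$, a monomial $M=\pi^a\,y_{i_1}\cdots y_{i_r}$ of $P_j$ (so $a\geq j$ and $r\leq c(j+1)$), and, for each factor $y_{i_s}$, a monomial of some $P_{i_s,l_s}$ (of $\pi$-adic valuation $\geq l_s$ and degree $\leq c'(l_s+1)$). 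Assigning to such a term the \emph{level} $h:=j+\sum_{s=1}^{r}l_s$, the resulting $\V$-monomial in $\underline{x}$ has $\pi$-adic valuation $\geq h$ and total degree $\leq c'\sum_s(l_s+1)=c'\big(\sum_s l_s+r\big)$.

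I then regroup the expansion by level: let $\widetilde{P}_h$ be the sum of all terms of level $h$. Two estimates make this work. First, $\widetilde{P}_h\in\pi^h B_m$, since every contributing monomial has valuation $\geq h$. Second, using $\sum_s l_s=h-j$, the bound $r\leq c(j+1)$ and $0\leq j\leq h$ together with $c\geq 1$, the degree estimate becomes $\deg\widetilde{P}_h\leq c'\big((h-j)+c(j+1)\big)=c'\big(h+(c-1)j+c\big)\leq cc'(h+1)$, which is linear in $h$ with the uniform constant $cc'$. Converting the $j$-dependence into a bound in $h$ by means of $c\geq 1$ is the delicate point of the estimate, and is the one place where the normalization is convenient.

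Finally I must check that each $\widetilde{P}_h$ is an honest (finite) polynomial and that the regrouping is legitimate. Finiteness holds because there are only finitely many terms of a given level $h$: the index $j$ ranges over $\{0,\dots,h\}$, each $P_j$ has finitely many monomials, the exponents $(l_1,\dots,l_r)$ run through the finitely many compositions of $h-j$ into $r$ nonnegative parts, and each $P_{i_s,l_s}$ has finitely many monomials. The identity $z=\sum_h\widetilde{P}_h(\underline{x})$ is then justified $p$-adically: the level-$h$ part lies in $\pi^h\widehat{A}$, so the terms tend to $0$ and the ultrametric permits the reordering. This exhibits $z$ as weakly complete over $x_1,\dots,x_m$ with constant $cc'$, and completes the proof. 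The main obstacle is thus purely combinatorial: organizing the triple summation so that the finiteness at each level and the linear-in-$h$ degree bound hold simultaneously.
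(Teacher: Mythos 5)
Your proof is correct and follows essentially the same route as the paper's: substitute weakly complete presentations of the $y_i$ into $\sum_j P_j(\underline{y})$, expand, regroup by a level index, check finiteness at each level, and obtain the linear degree bound with constant $cc'$ (the paper, taking a single common constant $c$, gets $c(1+c)(J+1)$). The only cosmetic difference is that you group terms by the index sum $h=j+\sum_s l_s$ whereas the paper groups by the exact $\pi$-adic valuation $J$ of the product polynomials $P_{(k_1,\dots,k_r)}$; both groupings rest on the same inequality (valuation $\geq$ index sum) and yield the same estimates.
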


\begin{proof}
  Soit $P (Y _{1},\dots,Y _{n})= \lambda \cdot Y _{\phi (1)}  Y _{\phi (2)} \cdots Y _{\phi (r)} $ avec $\phi (1),\dots, \phi (r)\in \{1,\dots, n \}$
  un monôme de $P _j$ avec $\lambda \in \V$.
  Par hypothèse, $v _\pi (\lambda) \geq j$ et
  $r \leq c (j+1)$.
Quitte à augmenter $c$,
il existe $x _1, \dots, x _m \in A$ tels que,
pour tout $i ' =1,\dots, n$, pour tout $k\in \N$,
il existe des polynômes $P _{i',k} \in \pi ^k B _m$ tels que
$\deg P _{i',k} \leq c (k+1)$
et 
$y _{i'} =\sum _{k\in \N } P _{i',k} (x _1,\dots, x_m)$.
On obtient
$$
P (y_1,\dots, y_n) =
\sum _{k _{1} \in \N,\dots, k _{r} \in \N }
\lambda \cdot P _{\phi (1),k _{1}}(x_1, \dots, x_m)  \cdots P _{\phi (r),k _{r}} (x _1,\dots, x_m).
$$
  Notons $P _{(k _{1},\dots, k _{r})} (X _1,\dots, X_m):=\lambda \cdot P _{\phi (1),k _{1}}(X_1, \dots, X_m)  \cdots P _{\phi (r),k _{r}} (X _1,\dots, X_m)$
   les polynômes de $B _{m}$ apparaissant dans cette somme.
   Notons $v _\pi (P _{(k _{1},\dots, k _{r})})$ la valuation $\pi$-adique minimale des coefficients de $P _{(k _{1},\dots, k _{r})}$.
   Par construction, $v _\pi (P _{(k _{1},\dots, k _{r})}) \geq j+ \sum _{i=1} ^r k_{i}$. 
   
   Soit $J$ un entier. Or, pour $j$ fixé, $P _{j}$ est une somme finie de tels monômes $P$. 
   Comme $v _\pi (P _{(k _{1},\dots, k _{r})}) \geq j+ \sum _{i=1} ^r k_{i}$, il en résulte qu'il existe un nombre fini de polynômes $P _{(k _{1},\dots, k _{r})}$ définis comme ci-dessus 
   et tels que $v _\pi (P _{(k _{1},\dots, k _{r})})=J$.
   Soit $Q _J (X_1, \dots, X_m)$ la somme de tous ces polynômes $P _{(k _{1},\dots, k _{r})}(X_1, \dots, X_m)$ tels que $v _\pi (P _{(k _{1},\dots, k _{r})}) =J$. 
   Alors $v _\pi (Q _J) \geq J$ et 
$z = \sum _{J\in\N} Q _J (x_1,\dots, x _m)$.

Il reste à majorer le degré de $Q _J$ : 
soit $P _{(k _{1},\dots, k _{r})}$ un de ces polynômes 
   tels que $v _\pi (P _{(k _{1},\dots, k _{r})})=J$.
   On a alors $\deg P _{(k _{1},\dots, k _{r})} = \sum _{i=1} ^r  \deg P _{\phi (i),k _{i}}
   \leq c \sum _{i=1} ^r  ( k _{i} +1) $.
Comme $r \leq c (j+1)$, comme $J \geq j+ \sum _{i=1} ^r k_{i}$, on en déduit alors 
   $\deg P _{(k _{1},\dots, k _{r})} \leq c (J -j + c (j+1)) \leq c (J +1 + c (J +1))= c (1 +c) (J +1)$.
   Posons $C =c (1+c)$. On a donc établi $\deg Q _J \leq C (J+1)$.
   D'où le résultat.
\end{proof}

Le lemme ci-après est immédiat :
\begin{lemm}
  \label{proddagconst}
  Soient $N _1 , N _2 \in \N $, $c \in \R$ et, pour tout $j\in \N$,
  soient $P_j, Q _j \in \pi ^j B_n$ tels que $\deg P _j \leq c (j + N _1)$,
  $\deg Q _j \leq c (j + N _2)$.
  On définit des éléments de $\V [t_1,\dots, t_n] ^\dag$ en posant :
  $z = \sum _{j\in \N} P _j (t_1,\dots, t_n)$,
  $u = \sum _{j\in \N} Q _j (t_1,\dots, t_n)$.

\begin{enumerate}
  \item \label{proddagconst-1)} Il existe, pour tout $\underline{k} \in \N ^n$ et tout $j\in\N$, des polynômes $\widetilde{P}_j\in \pi ^j B_n$ tels que
$\deg \widetilde{P} _j \leq c (j + N_1)$ et
$\underline{\partial} ^{<\underline{k}> _{(m)}}(z) = \sum _{j\in \N} \widetilde{P} _j (t_1,\dots, t_n)$.

\item   \label{proddagconst-2)}Il existe, pour tout $j\in\N$, des polynômes $R _{j},\widetilde{R} _j \in \pi ^j B_n$ tels que
$\deg R _j \leq c (j + \max \{ N_1,N_2\})$, $\deg \widetilde{R} _j \leq c (j + N_1 +N _2)$ et 
$z+u = \sum _{j\in \N} R _j (t_1,\dots, t_n)$,
$z u = \sum _{j\in \N} \widetilde{R} _j (t_1,\dots, t_n)$.
\end{enumerate}
\end{lemm}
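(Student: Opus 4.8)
The plan is to reduce all three assertions to one elementary property of the basis operators $\underline{\partial} ^{<\underline{k}> _{(m)}}$, together with the additivity of $\pi$-adic valuations and the subadditivity of degrees under products; no genuine difficulty should arise, consistent with the statement being immediate. First I would record that $\underline{\partial} ^{<\underline{k}> _{(m)}}$ preserves the lattice $\V [t_1,\dots,t_n]$ and lowers the total degree by $|\underline{k}|$: on a monomial one has $\underline{\partial} ^{<\underline{k}> _{(m)}}(\underline{t}^{\underline{a}}) = \big(\prod_{i=1}^{n} q_{k_i}!\binom{a_i}{k_i}\big)\,\underline{t}^{\underline{a}-\underline{k}}$, with $q_{k_i}=\lfloor k_i/p^m\rfloor$, so that the coefficient lies in $\Z\subset\V$ (a product of the integers $\binom{a_i}{k_i}$ and $q_{k_i}!$) and the degree drops by exactly $|\underline{k}|$ (the monomial mapping to $0$ when some $a_i<k_i$). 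In particular, the operator sends the evaluation $P(\underline{t})$ of any $P\in\pi^{j}B_n$ to a polynomial in $\V[t_1,\dots,t_n]$ with coefficients in $\pi^{j}\V$ and of degree $\le\deg P$.

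For assertion \ref{proddagconst-1)} I would use that $\underline{\partial} ^{<\underline{k}> _{(m)}}$ is $\V$-linear and, by the filtration-preserving property just noted, $\pi$-adically continuous, hence commutes with the convergent sum defining $z$; thus $\underline{\partial} ^{<\underline{k}> _{(m)}}(z)=\sum_{j}\underline{\partial} ^{<\underline{k}> _{(m)}}(P_j(\underline{t}))$. Each summand is a polynomial in $\underline{t}$ with coefficients in $\pi^{j}\V$ and of total degree $\le\deg P_j-|\underline{k}|\le c(j+N_1)$, so it is the evaluation of some $\widetilde{P}_j\in\pi^{j}B_n$ with $\deg\widetilde{P}_j\le c(j+N_1)$, as required. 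The sum in \ref{proddagconst-2)} is even more immediate: put $R_j:=P_j+Q_j\in\pi^{j}B_n$, so that $z+u=\sum_j R_j(\underline{t})$ with $\deg R_j\le\max\{\deg P_j,\deg Q_j\}\le c(j+\max\{N_1,N_2\})$.

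For the product in \ref{proddagconst-2)} I would form the Cauchy product $zu=\sum_{j,l}P_j(\underline{t})Q_l(\underline{t})$ and regroup by $j+l$: setting $\widetilde{R}_m:=\sum_{j+l=m}P_jQ_l$, a finite sum, one has $\widetilde{R}_m\in\pi^{m}B_n$ since $\pi^{j}\pi^{l}=\pi^{j+l}$, and $\deg(P_jQ_l)\le\deg P_j+\deg Q_l\le c(j+N_1)+c(l+N_2)=c(m+N_1+N_2)$ whenever $j+l=m$, whence $\deg\widetilde{R}_m\le c(m+N_1+N_2)$ and $zu=\sum_m\widetilde{R}_m(\underline{t})$. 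The only point deserving a word of care — the closest thing to an obstacle — is the legitimacy of the two rearrangements (distributing the operator across $\sum_j$, and regrouping the double product by total $\pi$-valuation); both are justified $\pi$-adically, because the contributions of index $j$ lie in $\pi^{j}B_n$, so for each fixed target valuation only finitely many original terms contribute (exactly the $m+1$ pairs $(j,l)$ with $j+l=m$ in the product case) and the partial sums form a Cauchy sequence. This is precisely why the lemma is immediate.
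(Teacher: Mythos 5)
Your proof is correct, and it is exactly the routine verification the paper has in mind: the paper states this lemma without proof, declaring it \emph{imm\'ediat}, so there is no written argument to diverge from. Your three ingredients --- the formula $\underline{\partial}^{<\underline{k}>_{(m)}}(\underline{t}^{\underline{a}})=\bigl(\prod_i q_{k_i}!\binom{a_i}{k_i}\bigr)\underline{t}^{\underline{a}-\underline{k}}$ showing the operators preserve $\pi^j$-integrality and do not raise degree, the term-by-term bounds for sums and Cauchy products, and the $\pi$-adic justification of the rearrangements --- are precisely the details being suppressed, and they are carried out correctly.
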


\begin{lemm}
\label{BerAotimeDdag=DdagU}
  Soient $f\,:\,\X' \rightarrow \X$ un morphisme fini étale de $\V$-schémas formels lisses,
  $\Y$ un ouvert affine de $\X$ muni de coordonnées locales et $\Y':= f^{-1} (\Y)$,
  $B:= \Gamma (\Y, \O _{\X}),\,
  B':= \Gamma (\Y', \O _{\X'})$.
  Le morphisme canonique
  $\D ^\dag _{\X'} \rightarrow f^*  \D ^\dag _{\X} $ est un isomorphisme.
  Le morphisme canonique $f^{-1}  \D ^\dag _{\X} \rightarrow \D ^\dag _{\X'} $
  induit est en fait un morphisme d'anneaux.
  Enfin, les morphismes qui en résultent
  $B' \otimes _B D ^\dag _{\X} \rightarrow D ^\dag _{\X'} $,
  $D ^\dag _{\X} \otimes _B B'  \rightarrow D ^\dag _{\X'} $
  sont des isomorphismes.
\end{lemm}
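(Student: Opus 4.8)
The plan is to prove everything first at finite level $m$, where it reduces to the classical compatibility of differential operators with \'etale base change, and then to pass to the $p$-adic completion and to the inductive limit over $m$ defining $\D^\dag$. Throughout I use that, $f$ being finite \'etale, $B'$ is a finite \'etale---hence finite projective---$B$-algebra, and that the derivations of $\Y$ lift uniquely to $\Y'$.

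I would begin with the global sections, the most concrete part. Let $t_1,\dots,t_d$ be the given local coordinates of $\Y$ and $\partial_1,\dots,\partial_d$ the associated derivations. Since $f$ is \'etale one has $\Omega^1_{\X'/\X}=0$, so each $\partial_i$ extends uniquely to a $\V$-derivation of $B'$ lifting the one of $B$; together with the $t_i$ these extended derivations give $\Y'$ a coordinate system for which the canonical basis $\underline{\partial}^{<\underline{k}>_{(m)}}$ of $\D^{(m)}$ is the same over $\Y$ and over $\Y'$. Hence $D^{(m)}_{\Y'}=\bigoplus_{\underline{k}}B'\,\underline{\partial}^{<\underline{k}>_{(m)}}=B'\otimes_B D^{(m)}_{\Y}$ as $B'$-modules, the isomorphism being one of rings thanks to the uniqueness of the lifted derivations. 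As $B'$ is finite over $B$, $p$-adic completion commutes with this base change, whence $\widehat{D}^{(m)}_{\Y'}\cong B'\otimes_B\widehat{D}^{(m)}_{\Y}$. Passing to the inductive limit over $m$ (which commutes with $B'\otimes_B-$), and using that $\Y$ is affine to identify $\Gamma(\Y,\D^\dag_\X)$ with $\varinjlim_m\widehat{D}^{(m)}_{\Y}$, I obtain $B'\otimes_B D^\dag_{\X}\riso D^\dag_{\X'}$. Using instead the right $\O_\X$-module structure of $\D^{(m)}$, the same computation gives the companion isomorphism $D^\dag_{\X}\otimes_B B'\riso D^\dag_{\X'}$.

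The sheaf statements follow from the same mechanism. The classical \'etale base change for the operators of level $m$ (see \cite{Be1}) gives a ring isomorphism $\widehat{\D}^{(m)}_{\X'}\riso f^*\widehat{\D}^{(m)}_{\X}$ and shows that the canonical map $f^{-1}\widehat{\D}^{(m)}_{\X}\to\widehat{\D}^{(m)}_{\X'}$ is a homomorphism of rings, again because derivations pull back along the \'etale $f$. Since $f^*=\O_{\X'}\otimes_{f^{-1}\O_\X}f^{-1}(-)$ commutes with the filtered colimit over $m$, taking $\varinjlim_m$ yields the isomorphism $\D^\dag_{\X'}\riso f^*\D^\dag_{\X}$ and makes $f^{-1}\D^\dag_{\X}\to\D^\dag_{\X'}$ a ring homomorphism, which are the first two assertions.

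The delicate point, and the main thing to check, is the passage to global sections in the last assertion: one needs $\Gamma(\Y,-)$ to commute both with the inductive limit defining $\D^\dag$ and with the finite base change $B'\otimes_B-$. The former holds because the underlying space of $\Y$ is Noetherian and the $\widehat{\D}^{(m)}_\X$ are acyclic on affines; the latter because $B'$ is finite and projective over $B$. This is exactly where the finiteness of $f$---and not merely its \'etaleness---is used. Once these two commutations are granted, the three isomorphisms are immediate.
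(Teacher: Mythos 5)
Your proof is correct and follows essentially the same route as the paper's: an \'etale base-change computation in local coordinates at each finite level $m$, then $p$-adic completion, then the inductive limit over $m$. The only presentational difference is that the paper carries out the completion step by reducing modulo $\pi^{i+1}$ and passing to the projective limit over $i$, whereas you invoke directly that completion commutes with $B'\otimes_B -$ because $B'$ is finite projective over $B$ --- the same fact the paper's limit argument uses implicitly.
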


\begin{proof}
  Notons $f _{i}\,:\, X '_{i}\to X _{i}$ (resp. $B _{i}$, $B '_{i}$) la réduction modulo $\pi ^{i+1}$ de $f$ (resp. $B$, $B'$).
  Via un calcul en coordonnées locales, on vérifie que le morphisme canonique
  $\D ^{(m)} _{X '_{i}} \rightarrow f^*  \D ^{(m)} _{X _{i}} $ est un isomorphisme.
  Par un calcul analogue, on prouve que le morphisme $f^{-1}  \D ^{(m)} _{X _{i}} \rightarrow \D ^{(m)} _{X _{i}'} $,
 induit via son inverse, est en fait un morphisme d'anneaux et
que les morphismes 
  $B' _{i}\otimes _{B_{i}} D ^{(m)} _{X _{i}} \rightarrow D ^{(m)} _{X _{i}'} $,
  $D ^{(m)} _{X _{i}} \otimes _{B_{i}} B _{i} '  \rightarrow D ^{(m)} _{X _{i}'} $
  induits en prenant les sections globales
  sont des isomorphismes.
Par passage à la limite projective sur $i$, puis passage à la limite inductive sur le niveau $m$,
on en déduit le lemme. 
\end{proof}

En remplaçant les faisceaux des opérateurs différentiels de Berthelot par ceux de Mebkhout-Narvaez, 
la vérification de l'analogue du lemme \ref{BerAotimeDdag=DdagU} est techniquement plus délicate
car la complétion $p$-adique faible est moins maniable que la complétion $p$-adique. 
Avant de traiter la partie (voir \ref{AotimeDdag=DdagU}) moins aisée de cette analogie, 
donnons d'abord sa partie triviale :

\begin{lemm}
\label{lemm-AotimeDdag=DdagU}
Soit $g\,:\,U ^{\prime \dag } \rightarrow U ^{\dag }$ un morphisme fini étale de $\V$-schémas formels faibles affines et lisses.
Le morphisme canonique de $\D ^{(m)} _{U ^{\prime \dag }} $-modules à gauche
$\D ^{(m)} _{U ^{\prime \dag }} \rightarrow g ^* \D ^{(m)} _{U ^{\dag }}  $ est un isomorphisme.
On dispose de plus des morphismes canoniques de $\V$-algèbres
$D ^{(m)\dag} _{U ^{\dag }}\rightarrow D ^{(m)\dag} _{U ^{\prime \dag }}$,
$D ^\dag _{U ^{\dag }} \rightarrow D ^\dag _{U ^{\prime \dag }}$.
\end{lemm}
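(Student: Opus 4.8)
The plan is to reduce this statement to a purely local computation on the underlying schemes, exactly as in the Berthelot-theoretic analogue \ref{BerAotimeDdag=DdagU}, while being careful that here we work with the weakly complete algebras rather than the $p$-adically complete ones. So first I would recall that since $g\,:\,U^{\prime\dag}\rightarrow U^{\dag}$ is finite étale, the same local-coordinate calculation used in the proof of \ref{BerAotimeDdag=DdagU} shows, at each finite level $m$ and before any completion, that the canonical morphism of left $\D^{(m)}_{U^{\prime\dag}}$-modules $\D^{(m)}_{U^{\prime\dag}}\rightarrow g^*\D^{(m)}_{U^{\dag}}$ is an isomorphism. The point is that étaleness makes the relative differentials vanish, so the sheaf of differential operators of level $m$ on $U^{\prime\dag}$ is obtained from that on $U^{\dag}$ by the flat base change along $g$; this is a statement about $\V$-schemes formels faibles lisses and does not require passing to any completion, so it follows from the usual functoriality of $\D^{(m)}$ under étale morphisms.

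Next I would produce the morphisms of $\V$-algebras on global sections. Taking global sections of the isomorphism $\D^{(m)}_{U^{\prime\dag}}\riso g^*\D^{(m)}_{U^{\dag}}$ over the affine $U^{\prime\dag}$, and using that $g$ is finite (so $g_*$ is exact and commutes with taking sections over affines), yields a canonical map $D^{(m)}_{U^{\dag}}\rightarrow D^{(m)}_{U^{\prime\dag}}$ of $\V$-algebras, simply as the effect of the inclusion $U^{\dag}\mapsto U^{\prime\dag}$ on differential operators pulled back along the finite cover. The key then is to apply the weak $p$-adic completion functor. Because forming $A^\dag$ from $A$ is functorial in the $\V$-algebra $A$ (a morphism $A\rightarrow A'$ of $\V$-algebras extends uniquely to $\widehat{A}\rightarrow \widehat{A'}$ and carries weakly complete generation into weakly complete generation, which is precisely the content used implicitly in \ref{Adagest-fc}), the map $D^{(m)}_{U^{\dag}}\rightarrow D^{(m)}_{U^{\prime\dag}}$ extends to $D^{(m)\dag}_{U^{\dag}}\rightarrow D^{(m)\dag}_{U^{\prime\dag}}$. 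Passing to the inductive limit over the level $m$ then gives the desired $D^\dag_{U^{\dag}}\rightarrow D^\dag_{U^{\prime\dag}}$.

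The only subtle point—though for this \emph{easy} half of the analogy it is genuinely mild—is checking that the finite-level map on global sections does extend to the weak completions, i.e.\ that the image of a weakly complete element is weakly complete. Here I expect the main (but still minor) obstacle to be a bookkeeping verification: one must see that if $x\in D^{(m)}_{U^{\dag}}$ is a generator and its image $\overline{x}\in D^{(m)}_{U^{\prime\dag}}$ is expressed in terms of finitely many weakly complete generators of $D^{(m)}_{U^{\prime\dag}}$ over $\V$, then the growth conditions on degrees and $\pi$-adic valuations are preserved. Since $g$ is finite, $D^{(m)}_{U^{\prime\dag}}$ is a finitely generated module over (the image of) $D^{(m)}_{U^{\dag}}$, so each generator of the target is weakly complete over $\V$ by the same argument as \ref{Adagest-fc}, and the controlled composition of weakly complete expressions remains weakly complete. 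This is exactly why the statement is labelled as the \emph{trivial} part of the analogy: the genuinely delicate isomorphism assertions $B'\otimes_B D^\dag_{\X}\riso D^\dag_{\X'}$ are deferred to \ref{AotimeDdag=DdagU} and are not claimed here, so no flatness or base-change subtlety intervenes at this stage.
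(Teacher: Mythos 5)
Your proposal is correct and takes essentially the same route as the paper: the local-coordinate computation of \ref{BerAotimeDdag=DdagU} gives the isomorphism $\D ^{(m)} _{U ^{\prime \dag }} \riso g ^* \D ^{(m)} _{U ^{\dag }}$, its inverse induces (after a coordinate check that it respects products) a morphism of $\V$-algebras on global sections, and functoriality of the weak $p$-adic completion followed by the inductive limit over the level $m$ concludes. The only harmless difference is that your last paragraph treats the extension to the weak completions as a bookkeeping issue invoking \ref{Adagest-fc}, whereas it is immediate from Definition \ref{Adagnoncom}: a weakly complete expression $\sum _{j} P _j (x _1,\dots, x _n)$ with $x _i \in D ^{(m)} _{U ^{\dag }}$ maps to $\sum _{j} P _j (\overline{x} _1,\dots, \overline{x} _n)$ with $\overline{x} _i \in D ^{(m)} _{U ^{\prime \dag }}$ and the same polynomials $P _j$, so no control of degrees or valuations needs to be re-verified.
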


\begin{proof}
Comme $g$ est fini et étale, 
on vérifie via un calcul en coordonnées locales (identique à celui de la preuve de \ref{BerAotimeDdag=DdagU}) que 
$\D ^{(m)} _{U ^{\prime \dag }} \rightarrow g ^* \D ^{(m)} _{U ^{\dag }}  $
est un isomorphisme.
Via son inverse, on construite alors le morphisme canonique $g ^{-1} \D ^{(m)} _{U ^{\dag }}
\rightarrow \D ^{(m)} _{U ^{\prime \dag }} $. 
On établit ensuite par un calcul en coordonnées locales que celui-ci est en fait 
un morphisme de $\V$-algèbres. En prenant les sections globales,
on obtient un morphisme de $\V$-algèbres $D ^{(m)} _{U ^{\dag }}
\rightarrow D ^{(m)} _{U ^{\prime \dag }} $. D'où, par fonctorialité de la complétion
$p$-adique faible de $\V$-algèbres,
le morphisme $D ^{(m)\dag } _{U ^{\dag }} \rightarrow D ^{(m)\dag} _{U ^{\prime \dag }} $.
Par passage à la limite sur le niveau, cela donne le morphisme canonique de $\V$-algèbres :
$D ^\dag _{U^{\dag }} \rightarrow D ^\dag _{U^{\prime \dag }}$.
\end{proof}

\begin{prop}
\label{AotimeDdag=DdagU}
Soient $g\,:\,U^\dag \rightarrow \A ^{n\dag} _\V$ un morphisme fini étale de
$\V$-schémas formels faibles lisses et
$A ^\dag := \Gamma (U ^\dag , \O _{U ^\dag})$.

Les morphismes canoniques $A ^\dag$-linéaires induits (via \ref{lemm-AotimeDdag=DdagU})
\begin{gather}\label{AotimeDdag=DdagU1}
  A ^\dag \otimes _{\V [ \underline{t}] ^\dag }D ^{(m)\dag} \rightarrow D ^{(m)\dag} _{U^\dag },\hspace{2cm}
D ^{(m)\dag}  \otimes _{\V [ \underline{t}] ^\dag }A ^\dag \rightarrow D ^{(m)\dag} _{U^\dag },\\
\label{AotimeDdag=DdagU2}
A ^\dag \otimes _{\V [ \underline{t}] ^\dag }D ^\dag \rightarrow D ^\dag _{U^\dag },\hspace{2cm}
D ^\dag  \otimes _{\V [ \underline{t}] ^\dag }A ^\dag \rightarrow D ^\dag _{U^\dag }
\end{gather}
sont des isomorphismes.
\end{prop}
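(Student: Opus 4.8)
The plan is to reduce everything to the finite-level statement \ref{AotimeDdag=DdagU1} and then to treat that by separating injectivity from surjectivity. First, the two isomorphisms of \ref{AotimeDdag=DdagU2} follow from those of \ref{AotimeDdag=DdagU1} by passing to the inductive limit on the level $m$: since $D ^\dag = \varinjlim_m D ^{(m)\dag}$ and $D ^\dag _{U^\dag} = \varinjlim_m D ^{(m)\dag} _{U^\dag}$, and $A ^\dag \otimes _{\V [\underline{t}] ^\dag}(-)$ commutes with $\varinjlim$, it suffices to prove \ref{AotimeDdag=DdagU1}. The two maps there (coefficients on the left, resp. on the right) are handled symmetrically, by putting operators into normal form $\sum _{\underline{k}} b _{\underline{k}} \underline{\partial} ^{<\underline{k}> _{(m)}}$ (resp. $\sum _{\underline{k}} \underline{\partial} ^{<\underline{k}> _{(m)}} b _{\underline{k}}$); I describe the left-hand one. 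Recall from \ref{lemm-AotimeDdag=DdagU} and the étaleness of $g$ that $A ^\dag$ is finite over $\V [\underline{t}] ^\dag$ and that, \emph{before} completion, $D ^{(m)} _{U^\dag} = \bigoplus _{\underline{k}} A ^\dag \underline{\partial} ^{<\underline{k}> _{(m)}}$, so that $A ^\dag \otimes _{\V [\underline{t}] ^\dag} D ^{(m)} \riso D ^{(m)} _{U^\dag}$ already holds; the whole issue is that this isomorphism survives the weak $p$-adic completion.

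For injectivity I would exploit that, $g$ being étale, $A ^\dag$ is a finite flat (indeed projective) $\V [\underline{t}] ^\dag$-module. Tensoring the inclusion $D ^{(m)\dag} \hookrightarrow \widehat{D} ^{(m)}$ of the weak completion into the $p$-adic completion by the flat module $A ^\dag$ gives an injection $A ^\dag \otimes _{\V [\underline{t}] ^\dag} D ^{(m)\dag} \hookrightarrow A ^\dag \otimes _{\V [\underline{t}] ^\dag} \widehat{D} ^{(m)}$. Since $A ^\dag$ is finite over $\V [\underline{t}] ^\dag$, one has $A ^\dag \otimes _{\V [\underline{t}] ^\dag} \widehat{\V [\underline{t}]} \riso \widehat{A}$, whence $A ^\dag \otimes _{\V [\underline{t}] ^\dag} \widehat{D} ^{(m)} \riso \widehat{A} \otimes _{\widehat{\V [\underline{t}]}} \widehat{D} ^{(m)}$, and the latter is $\widehat{D} ^{(m)} _U$ by the $p$-adically completed, finite-level version established inside the proof of \ref{BerAotimeDdag=DdagU}. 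As this composite factors through the inclusion $D ^{(m)\dag} _{U^\dag} \hookrightarrow \widehat{D} ^{(m)} _U$, the map \ref{AotimeDdag=DdagU1} is injective.

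For surjectivity I would show that the image $I$ of \ref{AotimeDdag=DdagU1} is itself weakly complete as a $\V$-subalgebra of $\widehat{D} ^{(m)} _U$. Granting this, since $I$ contains the finitely many $\V$-algebra generators of $D ^{(m)\dag} _{U^\dag}$ — the coordinates $t _l$, a finite generating family $a _1, \dots, a _s$ of $A ^\dag$ over $\V [\underline{t}] ^\dag$, and the level-$m$ derivations — it must contain their weakly complete $\V$-algebra closure, namely all of $D ^{(m)\dag} _{U^\dag}$. To see that $I$ is weakly complete I would take a weakly convergent series $\sum _j P _j (z _1, \dots, z _N)$ with $z _i \in I$ and bring it to normal form: using the Leibniz rule to commute each $a _i$ to the left past the level-$m$ operators, and expressing every derivative $\underline{\partial} ^{<\underline{k}> _{(m)}}(a _i) \in A ^\dag$ back in the generators $a _1, \dots, a _s$, one rewrites the series as $\sum _{\underline{k}} b _{\underline{k}} \underline{\partial} ^{<\underline{k}> _{(m)}}$ with $b _{\underline{k}} \in A ^\dag$. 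The key point is that the level-$m$ structure constants are $\V$-integral, so no denominators appear, and the degree and $\pi$-adic bookkeeping of \ref{proddagconst} keeps the degree growth linear in $j$ under differentiation, multiplication and addition; \ref{Adagest-fc} then guarantees that the inner series defining each $b _{\underline{k}}$, and the outer series in the $\underline{\partial} ^{<\underline{k}> _{(m)}}$, remain weakly convergent, so the whole expression lies in $I$.

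The main obstacle is precisely this last step: controlling, after the normal-form rewriting, that the coefficients $b _{\underline{k}}$ and their assembly still satisfy the linear degree bound and the $\pi$-adic growth condition defining the weak completion — equivalently, that passing an $a _i$ through an unbounded product of level-$m$ derivations does not destroy weak convergence. This is the sole reason the technical estimates \ref{proddagconst} and \ref{Adagest-fc} were established beforehand, the weak $p$-adic completion being, as noted, far less flexible than the $p$-adic one.
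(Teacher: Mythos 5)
Your reduction to the finite-level map and your injectivity argument coincide with the paper's: the paper forms the same commutative square (its \ref{Ddagm-fet-inj}), obtains $A ^\dag \otimes _{\V [ \underline{t}] ^\dag }\widehat{D} ^{(m)}\riso \widehat{D} ^{(m)} _{\U}$ from the finiteness of $A ^\dag$ over $\V [ \underline{t}] ^\dag$ together with \ref{BerAotimeDdag=DdagU}, and deduces the injectivity of $\theta$ from flatness and separatedness, exactly as you do. Your surjectivity strategy --- rewrite a weakly convergent series in the generators into a normal form with all the $A ^\dag$-coefficients on the left --- is also the paper's (its steps $1)$ to $6)$); the repackaging as \og l'image est une sous-alg\`ebre faiblement compl\`ete contenant les g\'en\'erateurs \fg{} changes nothing of substance, since by \ref{Adagest-fc} both formulations reduce to the same computation.

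The gap sits precisely in the step you yourself flag as the main obstacle, and it is not closed by the lemmas you invoke. \ref{proddagconst} only concerns series in $\V [t _1,\dots, t _n] ^\dag$, and \ref{Adagest-fc} only the composition of weakly convergent series; neither controls what happens when derivations are commuted past elements of $A ^\dag$ inside monomials of unbounded length. The device that makes the bookkeeping close in the paper is the module-finiteness of $A ^\dag$ over $\V [ \underline{t}] ^\dag$, exploited through the matrix identities $\underline{\partial} ^{<\underline{a}> _{(m)}} (X )=A ^{(\underline{a})} X$ and $x_b \cdot X = B ^{(b)} X$ (the paper's \ref{E1} and \ref{E2}): every intermediate expression is kept \emph{linear} in the fixed generators $x _1,\dots, x _s$, with coefficients in $\V [ \underline{t}] ^\dag$ of controlled degree. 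With this, each right-to-left pass of a single $\underline{\partial} ^{<\underline{a}> _{(m)}}$ via Leibniz costs an additive $+1$ in the degree bound of the coefficients (the paper's step $4)$), and since the number of such passes inside a monomial of $P _J$ is bounded by its degree, hence by $c (J+1)$, the final bound $c (1+c) (J+j+1)$ stays linear in $J$ (steps $5)$ and $6)$). You do mention re-expressing the derivatives $\underline{\partial} ^{<\underline{k}> _{(m)}} (a _i)$ in the generators (that is \ref{E1}), but you never linearize the \emph{products} of elements of $A ^\dag$ (\ref{E2}, the paper's step $3)$), and you never run the induction converting \og additive cost per pass \fg{} plus \og linearly many passes \fg{} into the required linear degree bound. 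Note also that your target normal form $\sum _{\underline{k}} b _{\underline{k}} \underline{\partial} ^{<\underline{k}> _{(m)}}$ with $b _{\underline{k}} \in A ^\dag$ does not by itself exhibit membership in the image: one must produce a \emph{finite} sum $\sum _{i=1} ^{s} x _i d _i$ with each $d _i \in D ^{(m)\dag}$, which again requires expressing the $b _{\underline{k}}$ over the module generators with degree bounds uniform in $\underline{k}$ --- this is exactly what the paper's final step $6)$ delivers and what your sketch leaves unproven.
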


\begin{proof}
Par symétrie et par passage à la limite inductive sur le niveau,
contentons-nous de vérifier que le morphisme canonique 
$\theta \,:\, A ^\dag \otimes _{\V [ \underline{t}] ^\dag }D ^{(m)\dag} \rightarrow D ^{(m)\dag} _{U^\dag }$
est un isomorphisme.

Comme $A ^\dag$ est une $\V [ \underline{t}] ^\dag $-algèbre finie,
$\V \{ \underline{t}\} \otimes _{\V [ \underline{t}] ^\dag } A ^\dag \riso \widehat{A}$. 
Il en résulte le premier isomorphisme :
$A ^\dag \otimes _{\V [ \underline{t}] ^\dag }\widehat{D} ^{(m)}\riso \widehat{A} \otimes _{\V \{ \underline{t}\}} \widehat{D} ^{(m)}
\riso \widehat{A} \widehat{ \otimes} _{\V \{ \underline{t}\}} \widehat{D} ^{(m)}
\riso \widehat{D} ^{(m)} _{\U},$ le dernier isomorphisme résultant de \ref{BerAotimeDdag=DdagU} (voir sa preuve).
Cet isomorphisme composé s'inscrit dans le diagramme commutatif :
\begin{equation}
\label{Ddagm-fet-inj}
\xymatrix @R=0,3cm {
{A ^\dag \otimes _{\V [ \underline{t}] ^\dag }\widehat{D} ^{(m)}} 
\ar[r] ^-{\sim}
& 
{ \widehat{D} ^{(m)} _{\U}} 
\\ 
{A ^\dag \otimes _{\V [ \underline{t}] ^\dag }D ^{(m)\dag}} 
\ar[r] ^-{\theta}
\ar@{^{(}->}[u]
& 
{ D ^{(m)\dag} _{U ^{\dag}}.}
\ar@{^{(}->}[u]
 }
\end{equation}
Comme $A ^\dag $ est une extension plate de $\V [ \underline{t}] ^\dag $, comme
$D ^{(m)\dag} $ et $ D ^{(m)\dag} _{U ^{\dag}}$ sont séparés (pour la topologie $p$-adique), il en résulte que
les flèches verticales de \ref{Ddagm-fet-inj} sont injectives. 
On obtient ainsi l'injectivité de $\theta$.

Prouvons à présent la surjectivité de $\theta$ via les étapes suivantes:

{\it $0)$ Fixons quelques notations.}
Soient $x _1,\dots, x _s \in A ^\dag$ engendrant $A ^\dag$ comme $\V [ t_1,\dots, t_n] ^\dag$-module.
Notons $X$ le vecteur colonne de coordonnées $x _1,\dots, x_s$.

$\bullet $ Pour tout $\underline{a} = (a_1,\dots, a_n) \leq p^m$ (i.e. $a_1,\dots, a_n \leq p^m$),
soient
$A ^{(\underline{a})} = ( a ^{(\underline{a})} _{ij}) _{1\leq i,j\leq s}
\in M _s (\V [ t_1,\dots, t_n] ^\dag)$ tel que
\begin{equation}
  \label{E1}
  \underline{\partial} ^{<\underline{a}> _{(m)}} (X )
  =
  A ^{(\underline{a})} X,   
\end{equation}
où $\underline{\partial} ^{<\underline{a}> _{(m)}}  (X)$ désigne le vecteur colonne
de coordonnées $\underline{\partial} ^{<\underline{a}> _{(m)}}  (x _1),\dots,
\underline{\partial} ^{<\underline{a}> _{(m)}} ( x_s)$.
Il existe une constante réelle $c \geq 1$ et, pour tout $k\in\N$,
des polynômes $P _{ijk} ^{(\underline{a})} \in \pi ^k B _n$ tels que
$\deg P _{ijk} ^{(\underline{a})} \leq c (k+1)$ et
$a _{ij} ^{(\underline{a})} = \sum _{k\in\N} P _{ijk} ^{(\underline{a})} (t_1,\dots, t_n)$.

$\bullet $ Pour tout $1 \leq b \leq s$, soit
$B ^{(b)} = (b ^{(b)} _{ij}) _{1\leq i,j\leq s}
\in M _s (\V [ t_1,\dots, t_n] ^\dag)$ tel que
\begin{equation}
  \label{E2}
  x_b   \cdot X
  =
  B ^{(b)} X,
\end{equation}
où $x_b   \cdot X$ est le vecteur colonne de coordonnées
$x _b x _1, \dots , x _b x _s$.
Quitte à augmenter $c$, il existe pour tout $k\in\N$,
des polynômes $P _{ijk} ^{(b)} \in \pi ^k B _n$ tels que
$\deg P _{ijk} ^{(b)} \leq c (k+1)$ et vérifiant
$b _{ij} ^{(b)} = \sum _{k\in\N} P _{ijk} ^{(b)} (t_1,\dots, t_n)$.

{\it $1)$} Soit $z \in D ^{(m)\dag} _{U ^\dag}$.
Il existe alors des éléments $y _{1}, \dots, y _{e}$ de $D ^{(m)} _{U ^\dag}$ tels que $z$ soit engendré de manière faiblement complète par $y _{1},\dots, y _{e}$. 
Comme
$D ^{(m)} _{U ^\dag}$ est une $\V[t_1,\dots, t _n] ^\dag $-algèbre engendrée par
$x _1, \dots, x _s$ et par
$\underline{\partial} ^{<\underline{a}> _{(m)}}$ pour $\underline{a} \leq p^m$, 
$y _{1},\dots, y _{e}$ sont alors
engendrés de manière faiblement complète par $x _1, \dots, x _s$ où l'on choisit $x _{1}=1$, par $t _{1},\dots, t _{n}$ et par
$\underline{\partial} ^{<\underline{a}> _{(m)}}$ pour $\underline{a} \leq p^m$. 
Il découle alors de \ref{Adagest-fc} que $z$ est engendré de manière faiblement complète par $x _1, \dots, x _s$, par $t _{1},\dots, t _{n}$ et par
$\underline{\partial} ^{<\underline{a}> _{(m)}}$ pour $\underline{a} \leq p^m$. 
Posons $N := n +s + n (p ^m +1)$. 
Il existe ainsi,
pour tout $J\in \N$,
des polynômes $P _J \in \pi ^J B _N$ tels que
$\deg  P _J \leq c (J+1)$ et
$z =\sum _{J\in\N} P _J (t _1, \dots, t _n, x _1, \dots, x _s,
\underline{\partial} ^{<\underline{a}> _{(m)}}, \underline{a}\leq p^m)$.

{\it $2)$}
D'après les formules \cite[2.2.4.(ii) et (iv)]{Be1},
le passage de droite à gauche d'un polynôme en $t _1,\dots, t_n$
par rapport
à un opérateur de la forme
$\underline{\partial} ^{<\underline{a}> _{(m)}}$
n'augmente pas le degré en $t _{1}, \dots, t _{n}$.
On peut donc supposer que $P _{J}$ est une somme {\it finie} de monômes 
de la forme :
$$M _J =\pi ^J P (t_1,\dots, t _n)
 Q _1 ( \underline{\partial} ^{<\underline{a}> _{(m)}}, \underline{a}\leq p^m)
P _1 ( x_1, \dots, x _s)
\cdots
Q _r ( \underline{\partial} ^{<\underline{a}> _{(m)}}, \underline{a}\leq p^m)
P _r ( x_1, \dots, x _s),$$
où $P $ est un monôme de $ B _{n}$ et, pour $i=1,\dots, r$,
$P _i $ est un monôme unitaire de $ B _s$ et $Q _i $ est un monôme unitaire $B _{n (p ^m +1)}$, avec toujours 
$\deg M _J \leq c (J+1)$ (en tant que monôme de $B _{N}$).

{\it $3)$ Linéarisation de $P _1 ( x_1, \dots, x _s),\dots, P _r ( x_1, \dots, x _s)$.}
En utilisant \ref{E2} et via \ref{proddagconst}.\ref{proddagconst-2)}, on vérifie que, pour tout $1 \leq u \leq r$,
il existe un vecteur ligne $L ^{(u)} = (l ^{(u)} _1, \dots, l ^{(u)}_s)$ à coefficients dans $A ^\dag$
et, pour tout $1\leq i\leq s$ et tout $j \in \N$,
des polynômes $L ^{(u)} _{ij} \in \pi ^j B _n$ tels que
$\deg (L ^{(u)} _{ij} ) \leq c (j + \deg (P _u))$, $l ^{(u)} _i =\sum _{j\in\N} L ^{(u)} _{ij} (t _1,\dots, t _n)$
et 
$P _u (x _1,\dots, , x _s) = L ^{(u)} X$. Remarquons que pour $\deg (P _u)=0$, on a eu besoin d'avoir $x _{1}=1$ et qu'en fait on peut affiner en remplaçant $\deg (L ^{(u)} _{ij} ) \leq c (j + \deg (P _u))$ par 
$\deg (L ^{(u)} _{ij} ) \leq c (j + \deg (P _u)-1)$.

{\it $4)$ Passage de droite à gauche par rapport à $\underline{\partial} ^{<\underline{a}> _{(m)}}$ des combinaisons linéaires de $x_1,\dots, x _s$ à coefficients dans $A ^\dag$.}
Pour tout $\underline{a} \leq p^m$, avec \cite[2.2.4.(iv)]{Be1}
puis via la formule de Leibnitz de \cite[2.3.4.1]{Be1}, on obtient :
\begin{gather}
  \notag
  \underline{\partial} ^{<\underline{a}> _{(m)}} L ^{(u)} X
=
\sum _{\underline{h} \leq \underline{a}}
\left \{
\begin{smallmatrix}
  \underline{a} \\
  \underline{h}
\end{smallmatrix}
\right \}
\underline{\partial} ^{<\underline{a}-\underline{h}> _{(m)}}
(L ^{(u)} X)
\underline{\partial} ^{<\underline{h}> _{(m)}}
=
\sum _{\underline{h} \leq \underline{a}}
\left \{
\begin{smallmatrix}
  \underline{a} \\
  \underline{h}
\end{smallmatrix}
\right \}
\sum _{\underline{h}' \leq \underline{a}-\underline{h}}
\left \{
\begin{smallmatrix}
  \underline{a}-\underline{h} \\
  \underline{h}'
\end{smallmatrix}
\right \}
\underline{\partial} ^{<\underline{a}-\underline{h}-\underline{h}'> _{(m)}} (L^{(u)})
A ^{(\underline{h}')} X
\underline{\partial} ^{<\underline{h}> _{(m)}}
\\
=
\sum _{\underline{h} \leq \underline{a}}
L ^{(u,\underline{a})} _{\underline{h}} X\underline{\partial} ^{<\underline{h}> _{(m)}},
\end{gather}
où 
$L ^{(u,\underline{a})} _{\underline{h}}= ( l ^{(u,\underline{a})} _{\underline{h},1}, \dots, l ^{(u,\underline{a})} _{\underline{h},1})
:=\left \{
\begin{smallmatrix}
  \underline{a} \\
  \underline{h}
\end{smallmatrix}
\right \}
\sum _{\underline{h}' \leq \underline{a}-\underline{h}}
\left \{
\begin{smallmatrix}
  \underline{a}-\underline{h} \\
  \underline{h}'
\end{smallmatrix}
\right \}
\underline{\partial} ^{<\underline{a}-\underline{h}-\underline{h}'> _{(m)}} (L^{(u)})
A ^{(\underline{h}')} $
est un vecteur ligne à coefficients dans $A ^\dag$.
Or, d'après \ref{proddagconst},
il existe, pour tout $i=1,\dots, s$ et tout $j\in\N$,
des polynômes $L ^{(u,\underline{a})} _{\underline{h},ij}\in  \pi ^j B _n$
tels que $\deg (L ^{(u,\underline{a})} _{\underline{h},ij}) \leq c( j + \deg (P _u) +1)$
et $l ^{(u,\underline{a})} _{\underline{h},i} = \sum _{j\in \N} L ^{(u,\underline{a})} _{\underline{h},ij}(t _1,\dots, t_n)$.

En résumé : le passage de droite à gauche par rapport à $\underline{\partial} ^{<\underline{a}> _{(m)}}$ 
des combinaisons linéaires de $x_1,\dots, x _s$ 
coûte l'ajout de {\og $1$\fg} dans l'inégalité de la forme $\deg (L ^{(u,\underline{a})} _{\underline{h},ij}) \leq c( j + \deg (P _u) +1)$.
Ce nombre $1$ correspond aussi au degré des monômes $\underline{\partial} ^{<\underline{h}> _{(m)}}$ (car $\underline{h} \leq p^m$).

{\it $5)$}
En réitérant le procédé de l'étape $4)$,
on vérifie que $M _J$ est égal à une somme finie de termes de la forme
$$R _{J}=\pi ^J L X
 Q  ( \underline{\partial} ^{<\underline{a}> _{(m)}}, \underline{a}\leq p^m)  $$
 où 
 $Q \in B _{n (p ^m +1)}$ avec $\deg (Q) \leq \deg (Q _1) +\dots + \deg (Q _r)\leq \deg (M _J)$,
  $L=(l _1 ,\dots, l_s)$ est un vecteur ligne à coefficients dans $A ^\dag$ tel que,
pour tout $i =1,\dots, s$ et tout $j\in\N$,
 il existe des polynômes $L _{ij}\in B _{n}$
 tels que
 $\deg (L _{ij}) \leq c (j+ \deg (M _J))$ et
$l _i =\sum _{j\in\N} \pi ^j  L _{ij} (t _1,\dots, t _n)$. 

{\it $6)$ Conclusion.}
Posons $R _{J,j}:= L _{ij} (t _1,\dots, t _n) 
 Q  ( \underline{\partial} ^{<\underline{a}> _{(m)}}, \underline{a}\leq p^m) \in B _{n + n (p ^m +1)} $.
Ainsi, 
$$R _{J}= \sum _{i=1} ^s x _{i} \sum _{j\in\N} \pi ^{J+j} R _{J,j}.$$
Comme $\deg (M _J) \leq \deg (P _J) \leq c (J+1)$, alors
$\deg (Q) \leq c (J+1)$ et $\deg (L _{ij}) \leq c (j+ c (J+1))$.
 D'où :
 $\deg (R _{J,j})
 \leq
 c (j+ c (J+1)) + c (J+1) 
 \leq
 c (1+ c)  (J+j+1)$.

Lorsque $j$ et $J$ sont fixés, l'ensemble des polynômes de la forme $R _{J,j}$ définis comme ci-dessus est de cardinal fini.
Notons $\widetilde{R} _{J,j}$ la somme finie des éléments de cet ensemble. 
On obtient alors la somme
$$z =\sum _{i=1} ^s x _{i} \sum _{J,j\in\N} \pi ^{J+j} \widetilde{R} _{J,j}.$$
Comme $\deg (R _{J,j}) \leq c (1+ c)  (J+j+1)$, il en résulte que 
$ \sum _{J,j\in\N} \pi ^{J+j} \widetilde{R} _{J,j} \in D ^{(m)\dag}$. 
\end{proof}

\begin{prop}
\label{oub-com-otimes}
On garde les notations et hypothèses de \ref{AotimeDdag=DdagU}.
Les morphismes canoniques
$$D ^\dag _{\widehat{\A} ^n _\V}  \otimes _{D ^\dag }
D ^\dag _{U^\dag}
\rightarrow
D ^\dag _{\U },
\
D ^\dag _{U^\dag} \otimes _{D ^\dag }
D ^\dag _{\widehat{\A} ^n _\V}
\rightarrow
D ^\dag _{\U }$$
sont des isomorphismes.
\end{prop}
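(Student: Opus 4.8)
Le plan est de ramener les deux isomorphismes annonc\'es \`a la proposition \ref{AotimeDdag=DdagU} et au lemme \ref{BerAotimeDdag=DdagU} par de simples manipulations de produits tensoriels, en exploitant que $A ^\dag$ est une $\V [ \underline{t}] ^\dag$-alg\`ebre finie. Par sym\'etrie, on se concentrerait sur le premier morphisme, le second se traitant de fa\c{c}on analogue en \'echangeant les facteurs et en invoquant les isomorphismes sym\'etriques correspondants de \ref{AotimeDdag=DdagU} et \ref{BerAotimeDdag=DdagU}.

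On commencerait par utiliser l'isomorphisme $D ^\dag \otimes _{\V [ \underline{t}] ^\dag} A ^\dag \riso D ^\dag _{U^\dag}$ fourni par la proposition \ref{AotimeDdag=DdagU}, o\`u la structure de $D ^\dag$-module \`a gauche provient de la multiplication sur le facteur $D ^\dag$, pour r\'e\'ecrire, par associativit\'e du produit tensoriel,
\begin{gather*}
D ^\dag _{\widehat{\A} ^n _\V} \otimes _{D ^\dag} D ^\dag _{U^\dag}
\riso
D ^\dag _{\widehat{\A} ^n _\V} \otimes _{D ^\dag} \left( D ^\dag \otimes _{\V [ \underline{t}] ^\dag} A ^\dag \right)
\riso
D ^\dag _{\widehat{\A} ^n _\V} \otimes _{\V [ \underline{t}] ^\dag} A ^\dag.
\end{gather*}
Ensuite, comme le morphisme structural $\V [ \underline{t}] ^\dag \to D ^\dag _{\widehat{\A} ^n _\V}$ se factorise par $\V \{ \underline{t}\}$ (puisque $D ^\dag _{\widehat{\A} ^n _\V}$ est une $\V \{ \underline{t}\}$-alg\`ebre) et que l'on dispose, $A ^\dag$ \'etant finie sur $\V [ \underline{t}] ^\dag$, de l'isomorphisme $\V \{ \underline{t}\} \otimes _{\V [ \underline{t}] ^\dag} A ^\dag \riso \widehat{A}$ d\'ej\`a rappel\'e dans la preuve de \ref{AotimeDdag=DdagU}, une nouvelle application de l'associativit\'e fournirait
\begin{gather*}
D ^\dag _{\widehat{\A} ^n _\V} \otimes _{\V [ \underline{t}] ^\dag} A ^\dag
\riso
D ^\dag _{\widehat{\A} ^n _\V} \otimes _{\V \{ \underline{t}\}} \widehat{A}.
\end{gather*}

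Pour conclure, on appliquerait le lemme \ref{BerAotimeDdag=DdagU} au morphisme fini \'etale de $\V$-sch\'emas formels lisses $\U \to \widehat{\A} ^n _\V$ d\'eduit de $g$ par compl\'etion $p$-adique (avec $\Y = \widehat{\A} ^n _\V$, de sorte que $B = \V \{ \underline{t}\}$ et $B' = \widehat{A}$), ce qui donnerait $D ^\dag _{\widehat{\A} ^n _\V} \otimes _{\V \{ \underline{t}\}} \widehat{A} \riso D ^\dag _{\U}$ et ach\`everait la cha\^ine d'isomorphismes. Le point \`a ne pas n\'egliger, bien qu'essentiellement formel, serait de v\'erifier que la composition de ces isomorphismes co\"incide avec le morphisme canonique de l'\'enonc\'e : cela revient \`a contr\^oler la compatibilit\'e des diverses structures de module sur $D ^\dag$ et sur $\V \{ \underline{t}\}$, toutes induites par les morphismes canoniques des lemmes \ref{lemm-AotimeDdag=DdagU} et \ref{BerAotimeDdag=DdagU}, ce qui explique que la d\'eduction annonc\'ee dans l'introduction soit qualifi\'ee de facile.
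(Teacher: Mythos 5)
Votre démonstration est correcte et suit essentiellement la même démarche que celle du papier : réécriture de $D ^\dag _{U^\dag}$ via \ref{AotimeDdag=DdagU}, associativité du produit tensoriel, passage de $\V [ \underline{t}] ^\dag$ à $\V \{ \underline{t}\}$ grâce à la finitude de $A ^\dag$, puis conclusion par l'isomorphisme $D ^\dag _{\widehat{\A} ^n _\V} \otimes _{\V \{ \underline{t}\}} \widehat{A} \riso D ^\dag _{\U}$ de \ref{BerAotimeDdag=DdagU}. La vérification finale de compatibilité avec le morphisme canonique, que vous signalez explicitement, est laissée implicite dans le papier mais ne change rien au fond.
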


\begin{proof}
  D'après \ref{AotimeDdag=DdagU},
  $D ^\dag  \otimes _{\V [ \underline{t}] ^\dag }A ^\dag \riso D ^\dag _{U^\dag }$.
D'où:
$$D ^\dag _{\widehat{\A} ^n _\V}  \otimes _{D ^\dag } D ^\dag _{U^\dag}
\liso
D ^\dag _{\widehat{\A} ^n _\V}  \otimes _{D ^\dag }
D ^\dag  \otimes _{\V [ \underline{t}] ^\dag }A ^\dag
\liso
D ^\dag _{\widehat{\A} ^n _\V}
\otimes _{\V [ \underline{t}] ^\dag }
A ^\dag.$$
Or, comme $A ^\dag$ est une $\V [ \underline{t}] ^\dag $-algèbre finie,
$\V \{ \underline{t}\} \otimes _{\V [ \underline{t}] ^\dag } A ^\dag \riso \widehat{A}$.
Donc,
$D ^\dag _{\widehat{\A} ^n _\V}
\otimes _{\V \{ \underline{t}\}  }
\widehat{A}
\riso
D ^\dag _{\widehat{\A} ^n _\V}
\otimes _{\V [ \underline{t}] ^\dag }
A ^\dag$.
L'isomorphisme
$D ^\dag _{\widehat{\A} ^n _\V}
\otimes _{\V \{ \underline{t}\}  }
\widehat{A}
\riso
D ^\dag _{\U}$ de \ref{BerAotimeDdag=DdagU} nous permet de conclure.
\end{proof}

\subsection{Caractérisation des isocristaux surconvergents via les opérateurs différentiels de Mebkhout-Narvaez}

Nous donnons une description des isocristaux surconvergents sur l'espace affine (voir \ref{Eqii)etiii)} et \ref{lemm-Eqii)etiii)}).
Nous en déduisons ensuite, grâce à la section précédente, 
une description des isocristaux surconvergents sur les schémas finis et étales sur l'espace affine (voir \ref{Eq-isoc-Ddag}).

Dans cette section, nous garderons les notations suivantes : 
soient $\PP:= \widehat{\P} ^n _\V$ l'espace projectif formel sur $\V$ de dimension $n$,
  $u _0,\dots ,u _n$ les coordonnées projectives de $\PP$,
  $H _0$ l'hyperplan défini par $u _0=0$, i.e., $H _0:=\P _k ^n \setminus \A ^n _k$.
On désigne par $\O _{\PP} (\hdag H_0) _\Q$ (resp. $\D ^\dag _{\PP} (\hdag H_0) _\Q$) le faisceau des fonctions (resp. opérateurs différentiels de niveau fini) sur $\PP$ à singularités surconvergentes le long de $H _{0}$ (voir \cite[4.2]{Be1}).
On pose de plus $\D  _{\PP} (\hdag H_0) _\Q: = \O _{\PP} (\hdag H_0) _\Q \otimes _{\O _{\PP,\Q}}\D  _{\PP,\Q} $, où 
$\D  _{\PP}$ est le faisceau usuel des opérateurs différentiels sur $\PP$.

D'après le théorème de comparaison de Noot-Huyghe
(voir \cite{Noot-Huyghe-affinite-proj} ou \cite{Noot-Huyghe-affinite-genproj}),
on dispose dans cette situation géométrique de l'isomorphisme :
$D ^{\dag} _{K}=D ^{\dag} _{\A ^{n\dag} _\V, K}
\riso
\Gamma (\PP, \D ^\dag _{\PP} (\hdag H_0) _\Q)$.
Elle établit de plus la formule :
\begin{equation}
  \notag
  \Gamma (\PP, \D ^\dag _{\PP} (\hdag H_0) _\Q)
  =
\left\{ \sum _{\underline{k},\underline{l} \in \N ^n}
a _{\underline{k}, \underline{l}} \underline{t} ^{\underline{k}} \underline{\partial} ^{[\underline{l}]}\,|\,
\exists \eta < 1, \exists c \geq 1 \text{ tels que } |a _{\underline{k}, \underline{l}} |<
c \eta ^{|\underline{k}|+ |\underline{l}|}
\right\},
\end{equation}
où $t _{1}=\frac{u _{1}}{u _{0}},\dots, t _{n}=\frac{u _{n}}{u _{0}}$ désignent les coordonnées canoniques sur l'espace affine.

\begin{vide}
[Théorèmes de type $A$ sur l'espace affine ou un de ses ouverts affines]
\label{TheA-PH0}
Soit $\E$ un $\D ^\dag _{\PP} (\hdag H _0) _\Q$-module cohérent.

$\bullet$ D'après le théorème de type $A$ pour les
$\D ^\dag _{\PP} (\hdag H _0) _\Q$-modules cohérents (voir \cite{Noot-Huyghe-affinite-proj}),
$E:= \Gamma (\PP, \E)$ est un $D ^\dag _K$-module cohérent et
le morphisme canonique
\begin{equation}
\label{TheA-Adag}
\D ^\dag _{\PP} (\hdag H _0) _\Q \otimes _{D ^\dag _K} E \rightarrow \E
\end{equation}
est un isomorphisme.
Ainsi, les foncteurs $\Gamma (\PP, -)$ et $\D ^\dag _{\PP} (\hdag H _0) _\Q \otimes _{D ^\dag _K} -$ induisent des équivalences quasi-inverses entre la catégorie des $\D ^\dag _{\PP} (\hdag H _0) _\Q$-modules cohérents
et celle des $D ^\dag _K$-modules cohérents. 

$\bullet$ De même, 
le foncteur $\Gamma (\PP, -)$ induit une équivalence entre la catégorie des $\D _{\PP} (\hdag H _0) _\Q$-modules cohérents 
(resp. $\O _{\PP} (\hdag H _0) _\Q$-modules cohérents)
et celle des $D _K$-modules cohérents (resp. $\V [\underline{t}] ^\dag _{K}$-modules cohérents).

$\bullet $ Pour tout ouvert affine $\U ' \subset \widehat{\A} ^n _\V$,
d'après le théorème de type $A$ pour les $\D ^\dag  _{\U',\Q}$-modules cohérents
(voir \cite[3.6.5]{Be1}),
le morphisme canonique
\begin{equation}
\label{TheA-Adagbis}
\D ^\dag _{\U',\Q} \otimes _{D ^\dag _{\U',K}} \Gamma (\U', \E)
\rightarrow
\E |\U'
\end{equation}
est un isomorphisme.
En combinant \ref{TheA-Adagbis} et \ref{TheA-Adag}, il en résulte 
que
le morphisme canonique
\begin{equation}
\label{dagdag-dag}
  D ^\dag _{\U',K} \otimes _{D ^\dag _K} E \rightarrow
\Gamma (\U', \E)
\end{equation}
 est un isomorphisme.
\end{vide}

\begin{vide}
\label{Eqii)etiii)}
Grâce à Berthelot (voir \cite[2.2.12]{caro_courbe-nouveau} pour une version écrite d'ailleurs plus forte),
la catégorie $\mathrm{Isoc} ^{\dag} (\A ^{n} _{k}/K)$ est équivalence à celle des $\D ^\dag _{\PP} (\hdag H _0) _\Q$-modules cohérents, 
$\O _{\PP} (\hdag H _0) _\Q$-modules cohérents.
D'après les théorèmes de type $A$ (voir \ref{TheA-PH0}), 
la catégorie $\mathrm{Isoc} ^{\dag} (\A ^{n} _{k}/K)$ est donc équivalence à celle 
des $D ^\dag _K$-modules cohérents, $\V [\underline{t}] ^\dag _{K}$-cohérents.
\end{vide}

\begin{lemm}
\label{lemm-Eqii)etiii)}
Soit $E$ un $D _{K}$-module, cohérent pour sa structure induite de $\V [\underline{t}] ^\dag _{K}$-module.
Les assertions suivantes sont équivalentes :
\begin{enumerate}
\item \label{lemm(i)-Eqii)etiii)} $E$ est un isocristal surconvergent sur $\A ^{n} _{k}$ ;
\item \label{lemm(ii)-Eqii)etiii)} $E$ est muni d'une structure de $D ^\dag _K$-module cohérent prolongeant sa structure de $D _K$-module ;
\item \label{lemm(iii)-Eqii)etiii)} Le morphisme canonique 
$E \to  D ^\dag  _{K} \otimes _{D  _{K}} E$ est un isomorphisme.
\end{enumerate}
\end{lemm}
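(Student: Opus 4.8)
The plan is to prove the cycle $(i)\Rightarrow (iii)\Rightarrow (ii)\Rightarrow (i)$, of which the last two implications are formal and the first carries the content. For $(iii)\Rightarrow (ii)$: the left factor makes $D ^\dag _K \otimes _{D _K} E$ a left $D ^\dag _K$-module, and transporting this structure along the isomorphism of $(iii)$ endows $E$ with a $D ^\dag _K$-structure extending its $D _K$-structure, by $D _K$-linearity of the canonical map; since $E$ is $\V [\underline{t}] ^\dag _{K}$-coherent, hence finite over $D ^\dag _K$, and $D ^\dag _K$ is a coherent ring, $E$ is $D ^\dag _K$-coherent, which is $(ii)$. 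For $(ii)\Rightarrow (i)$: this is exactly \ref{Eqii)etiii)}, a $D ^\dag _K$-module that is both $D ^\dag _K$-coherent and $\V [\underline{t}] ^\dag _{K}$-coherent being a surconvergent isocrystal on $\A ^n _k$, with the given $D _K$-structure furnishing the connection.

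For $(i)\Rightarrow (iii)$, first the $D _K$-module $E$ yields, via the type $A$ theorem for $\D _{\PP} (\hdag H _0) _\Q$-modules (\ref{TheA-PH0}), a coherent $\D _{\PP} (\hdag H _0) _\Q$-module $\M := \D _{\PP} (\hdag H _0) _\Q \otimes _{D _K} E$ with $\Gamma (\PP, \M) = E$; in particular $E$ is $D _K$-coherent. Since $E$ is a surconvergent isocrystal, \ref{Eqii)etiii)} equips it with a coherent $D ^\dag _K$-structure extending its $D _K$-structure, to which \ref{TheA-Adag} attaches the coherent $\D ^\dag _{\PP} (\hdag H _0) _\Q$-module $\E := \D ^\dag _{\PP} (\hdag H _0) _\Q \otimes _{D ^\dag _K} E$ with $\Gamma (\PP, \E) = E$. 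The decisive geometric input is that the natural comparison morphism $\D ^\dag _{\PP} (\hdag H _0) _\Q \otimes _{\D _{\PP} (\hdag H _0) _\Q} \M \to \E$ is an isomorphism: an isocrystal has one and the same underlying coherent $\O _{\PP} (\hdag H _0) _\Q$-module whether viewed over $\D _{\PP} (\hdag H _0) _\Q$ or over $\D ^\dag _{\PP} (\hdag H _0) _\Q$, the level-wise infinite-order action being forced by the connection through overconvergence (Berthelot's theory, cf. \cite{caro_courbe-nouveau}).

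Granting this, I rewrite $\D ^\dag _{\PP} (\hdag H _0) _\Q \otimes _{\D _{\PP} (\hdag H _0) _\Q} \M \riso \D ^\dag _{\PP} (\hdag H _0) _\Q \otimes _{D _K} E \riso \D ^\dag _{\PP} (\hdag H _0) _\Q \otimes _{D ^\dag _K} (D ^\dag _K \otimes _{D _K} E)$, using $\M = \D _{\PP} (\hdag H _0) _\Q \otimes _{D _K} E$. Here $D ^\dag _K \otimes _{D _K} E$ is a coherent $D ^\dag _K$-module, being finitely presented over $D ^\dag _K$ since $E$ is $D _K$-coherent. Applying $\Gamma (\PP, -)$ to the resulting isomorphism $\D ^\dag _{\PP} (\hdag H _0) _\Q \otimes _{D ^\dag _K} (D ^\dag _K \otimes _{D _K} E) \riso \E$ and invoking \ref{TheA-Adag} on both ends identifies $D ^\dag _K \otimes _{D _K} E$ with $\Gamma (\PP, \E) = E$; tracking the construction shows this identification is the canonical morphism of $(iii)$.

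The main obstacle is precisely the comparison isomorphism $\D ^\dag _{\PP} (\hdag H _0) _\Q \otimes _{\D _{\PP} (\hdag H _0) _\Q} \M \riso \E$: everything else is bookkeeping with the type $A$ theorems, but this step is where one must use that passing from finite-order to finite-level infinite-order operators does not enlarge the module attached to an overconvergent connection, equivalently that the weak completion entering $D ^\dag _K$ adds no new global sections once the connection is overconvergent.
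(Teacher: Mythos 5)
Your proposal is correct in substance and follows essentially the same route as the paper. The heart of $(i)\Rightarrow(iii)$ in the paper is precisely what you call your ``main obstacle'': for the coherent $\D ^\dag _{\PP} (\hdag H _0) _\Q$-module $\E$ associated to the overconvergent isocrystal, the canonical morphism $\E \to \D ^\dag _{\PP} (\hdag H _0) _\Q \otimes _{\D _{\PP} (\hdag H _0) _\Q} \E$ is an isomorphism; the paper does not reprove this either, but cites \cite[2.2.8]{caro_comparaison} for it (your attribution to \cite{caro_courbe-nouveau} is off --- that reference supplies the equivalence \ref{Eqii)etiii)}, not the comparison between $\D$ and $\D ^\dag$ actions). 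Once this is granted, applying $\Gamma (\PP , -)$ and the type $A$ theorems of \ref{TheA-PH0} on both sides is exactly the paper's conclusion; your detour through $\M := \D _{\PP} (\hdag H _0) _\Q \otimes _{D _K} E$ amounts to the same thing, since type $A$ for coherent $\D _{\PP} (\hdag H _0) _\Q$-modules identifies $\M$ with $\E$ viewed by restriction of scalars. One step of yours is wrongly justified, though easily repaired: in $(iii)\Rightarrow(ii)$, ``finitely generated over the coherent ring $D ^\dag _K$'' does not imply $D ^\dag _K$-coherent, because finitely generated modules over a coherent ring need not be finitely presented. The correct argument --- the paper's one-line fix, which you in fact use correctly inside your proof of $(i)\Rightarrow(iii)$ --- is via noetherianity of $D _K$: $E$ is $D _K$-coherent, hence $D ^\dag _K \otimes _{D _K} E$ is finitely presented, therefore coherent, over the coherent ring $D ^\dag _K$, and the isomorphism of $(iii)$ transports this coherent structure to $E$.
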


\begin{proof}
D'après \ref{Eqii)etiii)}, $2\Leftrightarrow 1$.
Par noethéranité de $D  _{K}$, $E$ est $D  _{K}$-cohérent et donc $3\Rightarrow 2$.
Supposons que $E $ soit un isocristal surconvergent sur $\A ^{n} _{k}$.
Notons $\E$ le $\D ^\dag _{\PP} (\hdag H _0) _\Q$-module cohérent, 
$\O _{\PP} (\hdag H _0) _\Q$-module cohérent associé. D'après \cite[2.2.8]{caro_comparaison} (voir les notations de \cite[2.2.2]{caro_comparaison}) le morphisme canonique
$\E \to \D ^\dag _{\PP} (\hdag H _0) _\Q \otimes _{\D_{\PP} (\hdag H _0) _\Q} \E$ est un isomorphisme. 
Il suffit alors d'utiliser les théorèmes de type $A$ pour respectivement les 
$\D ^\dag _{\PP} (\hdag H _0) _\Q$-modules cohérents et les $\D _{\PP} (\hdag H _0) _\Q$-modules cohérents.
\end{proof}

\begin{prop}
\label{pre-Eq-isoc-Ddag}
Soient $g\,:\,U^\dag \rightarrow \A ^{n\dag} _\V$ un morphisme fini étale de
$\V$-algèbres formels faibles lisses et
$A ^\dag := \Gamma (U ^\dag , \O _{U ^\dag})$.
Soit 
$E$ un $D _{U^\dag ,K}$-module, cohérent pour sa structure induite de $A ^\dag _{K}$-module.
Notons $g _{*} (E)$ le $D _{K}$-module $\V [\underline{t}] ^\dag _{K}$-cohérent induit par $E$ (ainsi, $g _{*}$ est le foncteur oubli).
Les assertions suivantes sont équivalentes : 
\begin{enumerate}
\item \label{pre-Eq-isoc-Ddag-i} La connexion de $E$ est surconvergente, i.e., $E$ est un isocristal surconvergent sur $U _{0}$ ;
\item \label{pre-Eq-isoc-Ddag-ii} La connexion de $g _{*} (E)$ est surconvergente, i.e., $g _{*}(E)$ est un isocristal surconvergent sur $\A ^{n} _{k}$ ;
\item \label{pre-Eq-isoc-Ddag-iii} Le morphisme canonique $D _{K}$-linéaire
$g _{*}(E) \to D ^\dag  _{K} \otimes _{D  _{K}}g _{*}(E) $
est un isomorphisme ;
\item \label{pre-Eq-isoc-Ddag-iv} 
Le morphisme canonique $D _{U^\dag ,K}$-linéaire
$E \to D ^\dag  _{U^\dag ,K} \otimes _{D _{U^\dag ,K}}(E )$
est un isomorphisme. 
\end{enumerate}
\end{prop}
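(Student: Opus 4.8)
The plan is to establish the chain $1\Leftrightarrow 2\Leftrightarrow 3\Leftrightarrow 4$, where the two right-hand equivalences are purely ring-theoretic consequences of the previous subsection while the genuinely geometric content is concentrated in $1\Leftrightarrow 2$.

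First I would dispose of $2\Leftrightarrow 3$. By hypothesis $g_*(E)$ is a $D_K$-module that is coherent for its induced $\V[\underline{t}]^\dag_K$-structure, so it satisfies exactly the standing hypothesis of \ref{lemm-Eqii)etiii)}. Applying that lemma to $g_*(E)$ gives that $g_*(E)$ is an overconvergent isocrystal on $\A^n_k$ if and only if the canonical arrow $g_*(E)\to D^\dag_K\otimes_{D_K}g_*(E)$ is an isomorphism, which is precisely $2\Leftrightarrow 3$.

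For $3\Leftrightarrow 4$ I would build a natural isomorphism $D^\dag_{U^\dag,K}\otimes_{D_{U^\dag,K}}E\riso D^\dag_K\otimes_{D_K}g_*(E)$ compatible with the two canonical arrows out of $E$ (both sending $x$ to $1\otimes x$), so that the morphisms of $3$ and $4$ become isomorphisms simultaneously. The main input is \ref{AotimeDdag=DdagU}: tensoring \ref{AotimeDdag=DdagU2} with $K$ yields $D^\dag_K\otimes_{\V[\underline{t}]^\dag_K}A^\dag_K\riso D^\dag_{U^\dag,K}$, and the same computation with the uncompleted operators---which follows from the local-coordinate isomorphism $\D^{(m)}_{U^\dag}\riso g^*\D^{(m)}_{\A^{n\dag}_\V}$ of \ref{lemm-AotimeDdag=DdagU} by taking global sections along the finite morphism $g$---yields $D_K\otimes_{\V[\underline{t}]^\dag_K}A^\dag_K\riso D_{U^\dag,K}$. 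Chaining these,
\begin{align*}
D^\dag_K\otimes_{D_K}g_*(E)
&\riso D^\dag_K\otimes_{D_K}D_{U^\dag,K}\otimes_{D_{U^\dag,K}}E\\
&\riso \left(D^\dag_K\otimes_{\V[\underline{t}]^\dag_K}A^\dag_K\right)\otimes_{D_{U^\dag,K}}E
\riso D^\dag_{U^\dag,K}\otimes_{D_{U^\dag,K}}E,
\end{align*}
and unwinding the identifications shows that the relevant triangle commutes.

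Finally, $1\Leftrightarrow 2$ is where I expect the main obstacle, since it concerns the overconvergence of a connection rather than a coherent module structure. For $1\Rightarrow 2$ I would invoke that the direct image of an overconvergent isocrystal along the finite étale morphism $g$ is again overconvergent. For the converse, I would use that $g$ étale makes the diagonal $U^\dag\hookrightarrow U^\dag\times_{\A^{n\dag}_\V}U^\dag$ open and closed, hence produces a parallel idempotent $e\in A^\dag_K\otimes_{\V[\underline{t}]^\dag_K}A^\dag_K$ realizing $E$ as a direct factor of $g^*g_*(E)=A^\dag_K\otimes_{\V[\underline{t}]^\dag_K}E$ inside the category of modules with connection; since the inverse image $g^*g_*(E)$ of an overconvergent isocrystal is overconvergent, and a direct summand of an overconvergent isocrystal is overconvergent, $E$ is overconvergent on $U_0$. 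The delicate point is to check that this idempotent is horizontal, so that the splitting takes place inside $\mathrm{Isoc}^\dag(U_0/K)$; this is guaranteed by the étaleness of $g$, which makes the diagonal compatible with the induced connection.
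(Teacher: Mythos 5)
Your handling of $2\Leftrightarrow 3$ and $3\Leftrightarrow 4$ coincides with the paper's proof: the former is \ref{lemm-Eqii)etiii)} applied to $g_*(E)$, and the latter rests on the isomorphism $D^\dag_K \otimes_{D_K} D_{U^\dag,K} \riso D^\dag_{U^\dag,K}$ obtained by combining $D_K \otimes_{\V[\underline{t}]^\dag_K} A^\dag_K \riso D_{U^\dag,K}$ (global sections of \ref{lemm-AotimeDdag=DdagU}) with \ref{AotimeDdag=DdagU2}, exactly as in the paper. The divergence is at $1\Leftrightarrow 2$: the paper disposes of it with a single citation to \cite[7.2.15]{LeStum-livreRigCoh}, whereas you propose to reprove it. Note that your direction $1\Rightarrow 2$ is itself only an invocation of the stability of overconvergence under finite \'etale pushforward, which is a nontrivial known theorem and essentially the content of the very citation the paper uses; so the only genuinely new material in your proposal is the idempotent argument for $2\Rightarrow 1$.

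That argument has a gap, and moreover the difficulty is not where you locate it. Horizontality of the idempotent $e$ is automatic and has nothing to do with \'etaleness: in any commutative ring with a derivation $d$, an idempotent satisfies $de = d(e^2) = 2e\,de$, hence $(1-2e)\,de=0$, and $1-2e$ is a unit since $(1-2e)^2=1$, so $de=0$; \'etaleness only serves to produce $e$ (splitting of the diagonal). The genuinely delicate point, which you assert without proof, is that a direct summand, taken in the category of $A^\dag_K$-modules with integrable connection, of an overconvergent isocrystal is again overconvergent. This is not formal: without Frobenius structure the restriction functor from $\mathrm{Isoc}^{\dag}(U_0/K)$ to convergent isocrystals (equivalently, to modules with connection over $\widehat{A}_K$) is not full, so one must genuinely exploit the fact that your projector is $A^\dag_K$-linear, and prove that every horizontal $A^\dag_K$-linear endomorphism of (the global sections of) an overconvergent isocrystal is a morphism of $\mathrm{Isoc}^{\dag}(U_0/K)$ --- for instance by a density/continuity argument for the action of $D_{U^\dag,K}\subset D^\dag_{U^\dag,K}$ on $A^\dag_K$-coherent modules, or by checking directly that the overconvergence estimates on the Taylor series pass to the image of the projector. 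That verification is of the same order of difficulty as the equivalence being proved, so as written your proposal replaces the paper's citation by an unproven claim rather than by a proof.
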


\begin{proof}
Par \cite[7.2.15]{LeStum-livreRigCoh}, on vérifie l'équivalence entre les deux premières assertions. 
Il résulte de l'isomorphisme de \ref{lemm-AotimeDdag=DdagU} (avec aussi un passage aux sections globales) que le morphisme canonique $D _{K} \otimes _{\V [ \underline{t}] ^\dag }A ^\dag  \riso D _{U^\dag ,K} $ est un isomorphisme.
Grâce à \ref{AotimeDdag=DdagU2}, il en résulte qu'il en est de même du morphisme canonique : 
$D ^\dag _{K} \otimes _{D _{K}}D _{U^\dag ,K} \rightarrow D ^\dag _{U^\dag ,K}$.
On en déduit l'équivalence entre \ref{pre-Eq-isoc-Ddag-iii} et \ref{pre-Eq-isoc-Ddag-iv}.
Enfin, l'équivalence entre \ref{pre-Eq-isoc-Ddag-ii} et \ref{pre-Eq-isoc-Ddag-iii} 
découle de \ref{lemm-Eqii)etiii)}.
\end{proof}

\begin{coro}
\label{Eq-isoc-Ddag}
Avec les notations de \ref{pre-Eq-isoc-Ddag},
la catégorie $\mathrm{Isoc} ^{\dag} (U _{0}/K)$ des isocristaux surconvergents sur $ U _{0}$
est équivalente à celles des $D ^\dag _{U^\dag ,K}$-modules cohérents, cohérents 
pour leur structure induite de $A ^\dag _{K}$-module.
\end{coro}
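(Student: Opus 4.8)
Le plan est de construire explicitement deux foncteurs quasi-inverses et de ramener tout le contenu non formel \`a la proposition \ref{pre-Eq-isoc-Ddag}. D'un c\^ot\'e, le foncteur d'extension $F \colon E \mapsto D ^\dag _{U^\dag ,K} \otimes _{D _{U^\dag ,K}} E$, d\'efini sur la cat\'egorie $\mathrm{Isoc} ^{\dag} (U _{0}/K)$ que l'on r\'ealise (cf. la preuve de \ref{pre-Eq-isoc-Ddag}) comme celle des $D _{U^\dag ,K}$-modules $A ^\dag _{K}$-coh\'erents \`a connexion surconvergente, soit encore, par \ref{pre-Eq-isoc-Ddag}, v\'erifiant la condition \ref{pre-Eq-isoc-Ddag-iv} ; de l'autre, le foncteur d'oubli $G$ restreignant les scalaires le long du morphisme d'anneaux $D _{U^\dag ,K} \to D ^\dag _{U^\dag ,K}$. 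Il faudra v\'erifier que chacun est bien d\'efini, puis qu'ils sont quasi-inverses.

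Pour $F$ : lorsque $E$ v\'erifie \ref{pre-Eq-isoc-Ddag-iv}, le morphisme canonique $E \riso D ^\dag _{U^\dag ,K} \otimes _{D _{U^\dag ,K}} E = F(E)$ est un isomorphisme, de sorte que $F(E)$ est $A ^\dag _{K}$-coh\'erent, donc $D ^\dag _{U^\dag ,K}$-coh\'erent (un $D ^\dag _{U^\dag ,K}$-module $A ^\dag _{K}$-coh\'erent \'etant automatiquement $D ^\dag _{U^\dag ,K}$-coh\'erent). Ainsi $F$ prend bien ses valeurs dans la cat\'egorie annonc\'ee.

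Pour $G$ : soit $M$ un $D ^\dag _{U^\dag ,K}$-module coh\'erent et $A ^\dag _{K}$-coh\'erent. Via le morphisme d'anneaux $D ^\dag _{K} \to D ^\dag _{U^\dag ,K}$ de \ref{lemm-AotimeDdag=DdagU}, $M$ est en particulier un $D ^\dag _{K}$-module, $\V [\underline{t}] ^\dag _{K}$-coh\'erent puisque $A ^\dag _{K}$ est fini sur $\V [\underline{t}] ^\dag _{K}$, et cette structure prolonge la structure de $D _{K}$-module de $g _{*}(M)$. L'\'etape d\'ecisive est alors l'implication \ref{lemm(ii)-Eqii)etiii)}$\Rightarrow$\ref{lemm(iii)-Eqii)etiii)} du lemme \ref{lemm-Eqii)etiii)}, qui fournit l'isomorphisme $g _{*}(M) \riso D ^\dag _{K} \otimes _{D _{K}} g _{*}(M)$ : c'est exactement la condition \ref{pre-Eq-isoc-Ddag-iii} pour $G(M)$. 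La proposition \ref{pre-Eq-isoc-Ddag} donne alors \ref{pre-Eq-isoc-Ddag-i}, c'est-\`a-dire $G(M) \in \mathrm{Isoc} ^{\dag} (U _{0}/K)$. C'est ici que r\'eside la principale difficult\'e, \`a savoir l'idempotence de l'extension des scalaires $D \to D ^\dag$, que l'on \'evite en la ramenant, gr\^ace au morphisme $D ^\dag _{K} \to D ^\dag _{U^\dag ,K}$, au cas d\'ej\`a trait\'e de l'espace affine.

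Il resterait \`a v\'erifier que $F$ et $G$ sont quasi-inverses. Pour $G \circ F$, si $E$ v\'erifie \ref{pre-Eq-isoc-Ddag-iv} alors $G(F(E))$ n'est autre que $D ^\dag _{U^\dag ,K} \otimes _{D _{U^\dag ,K}} E$, canoniquement isomorphe \`a $E$ par \ref{pre-Eq-isoc-Ddag-iv}. Pour $F \circ G$, le morphisme d'action $D ^\dag _{U^\dag ,K} \otimes _{D _{U^\dag ,K}} M \to M$ est $D ^\dag _{U^\dag ,K}$-lin\'eaire et admet pour section le morphisme canonique $M \to D ^\dag _{U^\dag ,K} \otimes _{D _{U^\dag ,K}} M$ ; ce dernier \'etant un isomorphisme d'apr\`es le paragraphe pr\'ec\'edent (condition \ref{pre-Eq-isoc-Ddag-iv} pour $G(M)$), le morphisme d'action fournit un isomorphisme $D ^\dag _{U^\dag ,K}$-lin\'eaire $F(G(M)) \riso M$. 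On obtient ainsi l'\'equivalence de cat\'egories recherch\'ee.
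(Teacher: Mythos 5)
Votre d\'emonstration est correcte et suit essentiellement la m\^eme voie que le papier, dont la preuve tient en une phrase renvoyant aux \'equivalences \ref{pre-Eq-isoc-Ddag}.\ref{pre-Eq-isoc-Ddag-i} $\Leftrightarrow$ \ref{pre-Eq-isoc-Ddag}.\ref{pre-Eq-isoc-Ddag-iii} $\Leftrightarrow$ \ref{pre-Eq-isoc-Ddag}.\ref{pre-Eq-isoc-Ddag-iv} et \ref{lemm-Eqii)etiii)}.\ref{lemm(ii)-Eqii)etiii)} $\Leftrightarrow$ \ref{lemm-Eqii)etiii)}.\ref{lemm(iii)-Eqii)etiii)}, c'est-\`a-dire exactement aux ingr\'edients que vous utilisez. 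Vous ne faites qu'expliciter les foncteurs quasi-inverses d'extension et de restriction des scalaires le long de $D _{U ^\dag ,K} \to D ^\dag _{U ^\dag ,K}$ que le papier laisse implicites (votre affirmation de coh\'erence automatique sur $D ^\dag _{U ^\dag ,K}$ se justifiant, comme dans la preuve de \ref{lemm-Eqii)etiii)}, par noeth\'erianit\'e de $D _{U ^\dag ,K}$ et changement de base).
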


\begin{proof}
Cela résulte aussitôt des équivalences \ref{pre-Eq-isoc-Ddag}.\ref{pre-Eq-isoc-Ddag-i} $\Leftrightarrow$ \ref{pre-Eq-isoc-Ddag}.\ref{pre-Eq-isoc-Ddag-iii}$\Leftrightarrow$ \ref{pre-Eq-isoc-Ddag}.\ref{pre-Eq-isoc-Ddag-iv} et \ref{lemm-Eqii)etiii)}.\ref{lemm(ii)-Eqii)etiii)} $\leftrightarrow$ \ref{lemm-Eqii)etiii)}.\ref{lemm(iii)-Eqii)etiii)}.
\end{proof}

\subsection{Caractérisation des isocristaux surcohérents sur certains sous-schémas de l'espace affine}

Dans la suite de cette section, nous conserverons les notations suivantes :
  soient $\PP:= \widehat{\P} ^n _\V$ l'espace projectif formel sur $\V$ de dimension $n$,
  $u _0,\dots ,u _n$ les coordonnées projectives de $\PP$ (ou, par abus de notations, de $\P _\V ^n$ ou $\P _k ^n$),
  $H _0$ l'hyperplan défini par $u _0=0$, i.e., $H _0:=\P _k ^n \setminus \A ^n _k$.
  On note $t _{1}=\frac{u _{1}}{u _{0}},\dots, t _{n}=\frac{u _{n}}{u _{0}}$ les coordonnées canoniques de l'espace affine.
  Soient  $U ^\dag$ un ouvert affine de $ \A ^{n\dag} _\V$,
  $\U$ son complété $p$-adique,
$T _0:=\P _k ^n \setminus U _0$ le diviseur réduit de $\P _k ^n$
  dont le support est le complémentaire de $U _0$.
Soit $v\,:\, Y ^\dag \hookrightarrow U ^\dag$ une immersion fermée
  de $\V$-schémas formels faibles affines et lisses, avec $Y _0$ intègre et $\dim Y _0 = n-r$
  pour un certain entier $r$.
  On suppose de plus qu'il existe un morphisme fini et étale $g_0\,:\, U _0 \rightarrow \A ^n _k $ tel que
  $g _0 (Y _0) \subset \A ^{n-r} _k$( grâce aux travaux de Kedlaya dans \cite{Kedlaya-coveraffine} ou \cite{Kedlaya-coveraffinebis}, cette hypothèse est en fait génériquement valable). On note $g\,:\,U^\dag \rightarrow \A ^{n\dag} _\V$ le relèvement de $g_0$.
Les complétés $p$-adiques de $v$ ou $g$ seront encore notés respectivement $v$ ou $g$.

Le résultat principal de cette section est la caractérisation de \ref{pass-cvasurcv} des isocristaux surcohérents sur $Y _{0}$.

\begin{vide}
Comme $U ^\dag $ est un ouvert de $\A ^{n \dag} _\V$, on obtient le morphisme
canonique de restriction (pour tout niveau $m$) $\mathrm{restr}\,:\, D ^{(m)} _{\A ^{n\dag} _\V} \rightarrow
D ^{(m)} _{U ^\dag}$. Par fonctorialité de la complétion $p$-adique faible puis par passage
à la limite sur le niveau,
il en résulte le morphisme canonique:
\begin{equation}
  \label{dag->Bert}
\mathrm{restr}\,:\, \Gamma (\PP, \D ^\dag _{\PP} (\hdag H_0) _\Q)
\riso
D ^{\dag} _{\A ^{n\dag} _\V}
\rightarrow
D ^{\dag} _{U ^\dag }.
\end{equation}

\end{vide}

\begin{rema}
  \label{conn-finiétal}
  Soit
$h\,:\,X ^{\prime \dag}=\Spf B ^\dag \rightarrow \Spf A ^\dag = X ^{\dag} $
un morphisme fini et étale de $\V$-schémas formels faibles affines et lisses.
Soit $M$ un $B ^\dag _K$-module cohérent muni d'une connexion intégrable
$M \rightarrow M \otimes _{B ^\dag} \Omega ^1 _{B^\dag}$, i.e., un $D _{X ^{\dag}, K}$-module cohérent, $B ^\dag _K$-cohérent. 
Soit
$\mathcal{M}$ un $\widehat{B}  _K$-module cohérent muni d'une connexion intégrable
$\mathcal{M} \rightarrow \mathcal{M} \otimes _{\widehat{B} } \Omega ^1 _{\widehat{B} }$,
i.e., un $D _{\X ', K}$-module cohérent, $\widehat{B}  _K$-cohérent. 
On désigne par $h _{*} (M)$ (resp. $h _{*} (\mathcal{M})$) le $D _{X ^{\dag}, K}$-module cohérent induit via le morphisme 
$D _{X ^{\dag}, K} \to D _{X ^{\prime \dag}, K}$ (resp. $D _{\X , K}\to D _{\X ', K}$)
induit par $h$ (voir \ref{lemm-AotimeDdag=DdagU}). 

On suppose qu'il existe un morphisme $B ^\dag _K$-linéaire $M\to \mathcal{M}$ commutant aux connexions et tel que 
le morphisme canonique induit $\widehat{A}  _K \otimes _{A ^\dag _K} h _{*} (M )\to h _{*} ( \mathcal{M})$ soit un isomorphisme commutant aux connexions.
Dans ces conditions, le morphisme canonique induit $\widehat{B}  _K \otimes _{B ^\dag _K} M \to \mathcal{M}$ est alors un isomorphisme commutant aux connexions, i.e., est $D _{\X ^{\prime}, K}$-linéaire.

\end{rema}

\begin{lemm}
\label{gamma-u!proj}
  Soit $\alpha\,:\,\widehat{\P} ^{n-r} _\V
\hookrightarrow
\widehat{\P} ^{n} _\V$
l'immersion fermée définie par
$u_1=0,\dots, u _r=0$.
Soient $\E$ un $\D ^\dag _{\PP} (\hdag H _0) _\Q$-module cohérent
à support dans $ \P ^{n-r} _k$ et $E := \Gamma (\widehat{\P} ^{n} _\V, \E)$.
Alors,
$\Gamma (\widehat{\P} ^{n-r} _\V, \alpha^! (\E))\riso
\cap _{i =1} ^r \ker (t _{i} \,:\, E\rightarrow E)$.
\end{lemm}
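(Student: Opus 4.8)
The plan is to compute $\alpha^!(\E)$ explicitly, exploiting that $\alpha$ is the closed immersion cut out by the coordinates $u_1,\dots,u_r$ — equivalently, on the affine chart, by $t_1,\dots,t_r$ — and then to pass to global sections using the type-$A$ theorems of \ref{TheA-PH0}.

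First I would recall the explicit form of the extraordinary inverse image along a closed immersion defined by a regular sequence of coordinates. Since $\alpha$ is defined by $t_1=\dots=t_r=0$, the relative transfer bimodule admits a Spencer--Koszul resolution, and $\alpha^!(\E)$ is represented by the Koszul complex of the commuting multiplications $t_1,\dots,t_r$ acting on $\E$, placed so that the cohomology in the relevant degree is $\cap_{i=1}^r \ker(t_i:\E\to\E)$. Because $\E$ is supported on $\P^{n-r}_k=\{t_1=\dots=t_r=0\}$, an arithmetic Kashiwara equivalence guarantees that $\alpha^!(\E)$ is concentrated in a single degree and that this cohomology module is exactly the coherent $\D^\dag$-module on $\widehat{\P}^{n-r}_\V$ corresponding to $\E$; hence the only contribution to $\Gamma(\widehat{\P}^{n-r}_\V,\alpha^!(\E))$ is the global sections of $\cap_{i=1}^r\ker(t_i:\E\to\E)$.

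Next I would transfer this identification to the level of $E=\Gamma(\PP,\E)$. The type-$A$ theorem (\ref{TheA-PH0}) provides an equivalence $\Gamma(\PP,-)$ between coherent $\D^\dag_{\PP}(\hdag H_0)_\Q$-modules and coherent $D^\dag_K$-modules, with quasi-inverse $\D^\dag_{\PP}(\hdag H_0)_\Q\otimes_{D^\dag_K}-$; in particular $\Gamma(\PP,-)$ is exact, so it commutes with the formation of kernels. Since each $t_i$ lies in $\V[\underline{t}]^\dag\subset D^\dag_K$ and acts on $\E$ compatibly with its action on $E$, exactness yields $\Gamma(\PP,\ker(t_i:\E\to\E))=\ker(t_i:E\to E)$, and more generally $\Gamma(\PP,\cap_{i=1}^r\ker(t_i:\E\to\E))=\cap_{i=1}^r\ker(t_i:E\to E)$. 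Finally, because $\alpha$ is a closed immersion the global-sections functor is insensitive to the Kashiwara pushforward identification, so $\Gamma(\widehat{\P}^{n-r}_\V,\alpha^!(\E))=\Gamma(\PP,\cap_{i=1}^r\ker(t_i:\E\to\E))$, and combining the two identifications gives the asserted isomorphism.

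The main obstacle I anticipate is matching the abstract definition of $\alpha^!$ — with its transfer bimodule and cohomological shift in the arithmetic $\D^\dag(\hdag H_0)_\Q$ setting — against the explicit Koszul complex of the $t_i$, and in particular verifying that the support hypothesis forces $\alpha^!(\E)$ into the single degree whose cohomology is $\cap_i\ker(t_i)$. Once the Koszul description and this concentration in one degree are in place, the passage to global sections is routine given the exactness of $\Gamma(\PP,-)$ from the type-$A$ theorems; the only remaining care is that the coordinates $t_1,\dots,t_r$ restrict correctly to the affine chart of $\widehat{\P}^{n-r}_\V$ and that the overconvergent divisor $H_0$ pulls back to the hyperplane at infinity of $\widehat{\P}^{n-r}_\V$, so that all modules involved remain genuinely $\D^\dag$-coherent with singularities along the correct divisor.
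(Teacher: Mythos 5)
Your proposal follows essentially the same route as the paper's proof: identify the transfer bimodule so that $\alpha^!(\E)$ is computed by the Koszul complex of the regular sequence $t_1,\dots,t_r$ acting on $\E$, invoke the Berthelot--Kashiwara theorem together with the support hypothesis to conclude that $\alpha^!(\E)$ is concentrated in the single degree whose cohomology is $\cap_{i=1}^r\ker(t_i:\E\to\E)$, and then pass to global sections. The only cosmetic difference is that you invoke the full type-$A$ exactness of $\Gamma(\PP,-)$ to commute it with kernels, whereas left exactness of the global-sections functor already suffices for this step, which is how the paper concludes.
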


\begin{proof}
  Notons $\PP': =\widehat{\P} ^{n-r} _\V$, $\I$ l'idéal définissant l'immersion fermée $\alpha$.
  On vérifie par complétion $p$-adique
$\widehat{\D} ^{(m)} _{\PP'\rightarrow \PP}\riso
\widehat{\D} ^{(m)} _{\PP}/\I \widehat{\D} ^{(m)} _{\PP}$.
Avec les notations de \cite[1.1.6]{caro_courbe-nouveau},
en ajoutant les singularités surconvergentes le long de $H _{0}$, 
par passage à la limite sur le niveau et tensorisation par $\Q$,
on obtient alors l'isomorphisme
$\D ^{\dag} _{\PP'\rightarrow \PP} (\hdag H _{0}) _{\Q}\riso
\D ^{\dag} _{\PP} (\hdag H _{0}) _{\Q} /\I \D ^{\dag} _{\PP}  (\hdag H _{0}) _{\Q}$.
Cela implique
$$\alpha^! (\E)= \D ^{\dag} _{\PP'\rightarrow \PP} (\hdag H _{0}) _{\Q}
\otimes ^\L _{\alpha^{-1} \D ^{\dag} _{\PP} (\hdag H _{0}) _{\Q}}
\alpha^{-1} \E [-r]
\riso
\O _{\PP} (\hdag H _{0}) _{\Q} /\I \O _{\PP}  (\hdag H _{0}) _{\Q}
\otimes ^\L _{\alpha^{-1} \O _{\PP}  (\hdag H _{0}) _{\Q}}
\alpha^{-1} \E [-r].$$
On remarque que $\I$ est engendré par les sections globales 
  $u _1,\dots, u _r$ et que $\I \O _{\PP}  (\hdag H _{0}) _{\Q}$ est engendré par les sections globales $t _{1}, \dots, t _{r}$ (en effet, $u _{0}$ est inversible dans $\I \O _{\PP}  (\hdag H _{0}) _{\Q}$).
Via la résolution de Koszul induite par la suite régulière des éléments $t _{1}, \dots, t _{r}$ qui engendrent l'idéal $\I \O _{\PP}  (\hdag H _{0}) _{\Q}$ 
de $\O _{\PP}  (\hdag H _{0}) _{\Q}$, 
on calcule
$\mathcal{H} ^0 (\alpha^! (\E))
\riso
\cap _{i =1} ^r \ker (t _{i} \,:\, \E\rightarrow \E)$.
Or, comme
$\E$ est à support dans $\P ^{n-r} _k$,
le théorème de Berthelot-Kashiwara implique
$\mathcal{H} ^0 (\alpha^! (\E)) \riso \alpha^! (\E)$.
D'où
$\alpha^! (\E )\riso
\cap _{i =1} ^r \ker (t_i \,:\, \E\rightarrow \E)$.
On conclut en lui appliquant le foncteur section globale.
\end{proof}

\begin{lemm}
  \label{Gamme!affi}
  Soient $\beta \,:\,\ZZ' \rightarrow \ZZ$ une immersion fermée de $\V$-schémas formels affines et lisses,
  $x _1,\dots, x_r$ des générateurs de l'idéal définissant $\beta$,
  $\E$ un $(F\text{-})\D^\dag _{\ZZ,\Q}$-module cohérent à support dans $Z'$.
  On dispose alors de l'isomorphisme canonique :
$\Gamma (\ZZ', \beta^! (\E)) \riso
\cap _{i=1} ^r \ker (x_i\,:\, \Gamma (\ZZ, \E)  \rightarrow \Gamma (\ZZ, \E)  )$.
\end{lemm}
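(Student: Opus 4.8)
Le plan est de calquer la d\'emonstration de \ref{gamma-u!proj}, en rempla\c{c}ant l'immersion lin\'eaire $\alpha$ par l'immersion g\'en\'erale $\beta$ et en travaillant sur les sch\'emas formels affines sans singularit\'es surconvergentes. Notons $\I$ l'id\'eal de $\O _{\ZZ}$ d\'efinissant $\beta$. Puisque $\ZZ$ et $\ZZ'$ sont lisses et que $\ZZ'$ est de codimension $r$, l'anneau $\Gamma (\ZZ, \O _\ZZ)$ est r\'egulier, donc de Cohen-Macaulay ; comme $\I$ est de hauteur $r$ et engendr\'e par les $r$ \'el\'ements $x _1, \dots, x _r$, ces derniers forment une suite r\'eguli\`ere (ce point exige que $r$ soit exactement la codimension de $\beta$).

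Je commencerais par identifier le module de transfert. Au niveau fini, la formule usuelle $\widehat{\D} ^{(m)} _{\ZZ' \rightarrow \ZZ} = \O _{\ZZ'} \otimes _{\beta ^{-1} \O _\ZZ} \beta ^{-1} \widehat{\D} ^{(m)} _\ZZ \riso \widehat{\D} ^{(m)} _\ZZ / \I \widehat{\D} ^{(m)} _\ZZ$ s'obtient par compl\'etion $p$-adique comme dans \ref{gamma-u!proj} ; par passage \`a la limite inductive sur le niveau puis tensorisation par $\Q$, on en d\'eduit $\D ^\dag _{\ZZ' \rightarrow \ZZ, \Q} \riso \D ^\dag _{\ZZ,\Q} / \I \D ^\dag _{\ZZ,\Q}$. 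On en tire alors, exactement comme au d\'ebut de la preuve de \ref{gamma-u!proj}, la r\'eduction aux $\O$-modules $\beta ^! (\E) = \D ^\dag _{\ZZ' \rightarrow \ZZ, \Q} \otimes ^\L _{\beta ^{-1} \D ^\dag _{\ZZ,\Q}} \beta ^{-1} \E [-r] \riso \O _{\ZZ'} \otimes ^\L _{\beta ^{-1} \O _{\ZZ}} \beta ^{-1} \E [-r]$, la platitude de $\D ^\dag _{\ZZ,\Q}$ sur $\O _{\ZZ,\Q}$ garantissant que le produit tensoriel d\'eriv\'e se ram\`ene aux structures sous-jacentes de $\O$-modules.

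La suite r\'eguli\`ere $x _1, \dots, x _r$ fournit alors une r\'esolution de Koszul de $\O _{\ZZ'} = \O _\ZZ / \I$, de sorte que $\O _{\ZZ'} \otimes ^\L _{\O _\ZZ} \E$ est calcul\'e par le complexe de Koszul $K _\bullet (x _1, \dots, x _r ; \E)$. Le terme de degr\'e cohomologique $0$ de $\beta ^! (\E)$, \`a savoir $\mathcal{H} ^{-r} (\O _{\ZZ'} \otimes ^\L _{\O _\ZZ} \E)$, s'identifie donc \`a l'homologie de Koszul sup\'erieure, c'est-\`a-dire $\mathcal{H} ^0 (\beta ^! (\E)) \riso \cap _{i=1} ^r \ker (x _i \,:\, \E \rightarrow \E)$. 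Comme $\E$ est \`a support dans $Z'$, le th\'eor\`eme de Berthelot-Kashiwara entra\^{\i}ne que $\beta ^! (\E)$ est concentr\'e en degr\'e $0$, d'o\`u $\beta ^! (\E) \riso \cap _{i=1} ^r \ker (x _i \,:\, \E \rightarrow \E)$ comme faisceaux. Il ne reste qu'\`a appliquer $\Gamma (\ZZ', -)$ : comme $\ZZ$ est affine, le th\'eor\`eme de type $A$ assure que $\Gamma (\ZZ, -)$ est exact sur les $\D ^\dag _{\ZZ,\Q}$-modules coh\'erents, donc commute \`a l'intersection finie de noyaux, ce qui livre l'isomorphisme annonc\'e.

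Le principal obstacle me semble r\'esider dans la r\'eduction aux $\O$-modules et dans la validit\'e de la r\'esolution de Koszul dans le cadre faiblement complet et rationnel ; une fois admis que $x _1, \dots, x _r$ forment une suite r\'eguli\`ere pour $\O _\ZZ$ et que \ref{gamma-u!proj} en fournit le mod\`ele, le reste de l'argument (Berthelot-Kashiwara, commutation de $\Gamma$ aux noyaux via l'affinit\'e) est formel.
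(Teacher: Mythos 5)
Your proposal is correct and follows essentially the same route as the paper, whose entire proof of \ref{Gamme!affi} consists of the single line that one proceeds analogously to the proof of \ref{gamma-u!proj}: transfer bimodule computed as $\D ^\dag _{\ZZ,\Q} / \I \D ^\dag _{\ZZ,\Q}$, reduction to $\O$-modules via flatness, Koszul resolution, Berthelot--Kashiwara, then global sections. Your only additions are the Cohen--Macaulay argument making $x _1, \dots, x _r$ a regular sequence (under the implicit assumption, satisfied in the paper's application, that $r$ equals the codimension of $\beta$), and the appeal to the type $A$ theorem in the last step, which is actually unnecessary since left-exactness of $\Gamma (\ZZ, -)$ already makes it commute with the finite intersection of kernels.
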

\begin{proof}
  On procède de manière analogue à la preuve de \ref{gamma-u!proj}.
\end{proof}

\begin{lemm} 
  Soit $\alpha\,:\,\widehat{\P} ^{n-r} _\V
\hookrightarrow
\widehat{\P} ^{n} _\V$
l'immersion fermée définie par
$u_1=0,\dots, u _r=0$. Soit $\beta\,:\,\widehat{\A} ^{n-r} _\V
\hookrightarrow
\widehat{\A} ^{n} _\V$ le morphisme induit par $\alpha$.
Soit $\E$ un $\D ^\dag _{\PP} (\hdag H _0) _\Q$-module cohérent
à support dans $ \P ^{n-r} _k$.
Le diagramme canonique 
\begin{equation}
  \label{Gamme!affi-u!proj}
  \xymatrix {
  {\Gamma (\P ^{n-r} _{k}, \alpha ^{!} (\E))}
  \ar[rr] ^-{\sim} _-{\ref{gamma-u!proj}}
  \ar@{^{(}->}[d]
  &&
  {\cap _{i=1} ^r \ker (t_i\,:\, \Gamma ( \P ^{n} _{k}, \E) \rightarrow \Gamma ( \P ^{n} _{k}, \E))}
  \ar@{^{(}->}[d]
\\
  {\Gamma (\A ^{n-r} _{k}, \alpha ^{!} (\E))}
  \ar@{=}[r] 
    &
  {\Gamma (\A ^{n-r} _{k}, \beta ^{!} (\E |\A ^{n} _{k}))}
    \ar[r] ^-{\sim} _-{\ref{Gamme!affi}}
  & 
  {\cap _{i=1} ^r \ker (t_i\,:\, \Gamma ( \A ^{n} _{k}, \E) \rightarrow
  \Gamma ( \A ^{n} _{k}, \E),}
  }
\end{equation}
où les isomorphismes horizontaux proviennent de \ref{gamma-u!proj} et \ref{Gamme!affi},
est commutatif.
\end{lemm}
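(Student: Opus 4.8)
The plan is to observe that both horizontal isomorphisms of \ref{Gamme!affi-u!proj} are produced by exactly the same Koszul computation, and that restriction to the affine open $\widehat{\A} ^n _\V \subset \PP$ transports one into the other. Write $j\,:\, \widehat{\A} ^n _\V \hookrightarrow \PP$ and $j'\,:\, \widehat{\A} ^{n-r} _\V \hookrightarrow \widehat{\P} ^{n-r} _\V$ for the open immersions, so that $j$ is the complement of $H _0$. First I would record that $j ^*$ is exact and sends $\O _{\PP} (\hdag H _0) _\Q$ to $\O _{\widehat{\A} ^n _\V , \Q}$, $\D ^\dag _{\PP} (\hdag H _0) _\Q$ to $\D ^\dag _{\widehat{\A} ^n _\V , \Q}$, and the ideal $\I \O _{\PP} (\hdag H _0) _\Q$ --- which by \ref{gamma-u!proj} is generated by the regular sequence $t _1, \dots, t _r$ --- onto the ideal defining $\beta$, again generated by $t _1, \dots, t _r$. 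In particular $\E |\A ^n _k$ is a coherent $\D ^\dag _{\widehat{\A} ^n _\V , \Q}$-module supported in $\A ^{n-r} _k = \P ^{n-r} _k \cap \A ^n _k$, so that $\beta ^! (\E |\A ^n _k)$ makes sense, and the lower-left equality $\Gamma (\A ^{n-r} _k, \alpha ^! (\E)) = \Gamma (\A ^{n-r} _k, \beta ^! (\E |\A ^n _k))$ follows from the compatibility of the extraordinary inverse image with restriction to an open subscheme (the square formed by $\alpha, \beta, j, j'$ being cartesian, one has $j ^{\prime *} \alpha ^! (\E) \riso \beta ^! (j ^* \E)$).

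Next I would unwind the two horizontal arrows. The top isomorphism of \ref{gamma-u!proj} is obtained by resolving $\O _{\PP} (\hdag H _0) _\Q / \I \O _{\PP} (\hdag H _0) _\Q$ by the Koszul complex attached to $t _1, \dots, t _r$, identifying $\alpha ^! (\E)$ with $\mathcal{H} ^0$ of the resulting complex via Berthelot--Kashiwara, and then taking global sections; the bottom isomorphism of \ref{Gamme!affi}, applied with $\ZZ = \widehat{\A} ^n _\V$, $\ZZ ' = \widehat{\A} ^{n-r} _\V$ and $x _i = t _i$, is the literally identical construction over the affine open. By the first paragraph, $j ^*$ carries the Koszul resolution on $\PP$ onto the Koszul resolution on $\widehat{\A} ^n _\V$ term by term, and it commutes with the formation of the transfer bimodule, with the derived tensor product, and with $\mathcal{H} ^0$. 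Hence the identification of $\alpha ^! (\E)$ with $\cap _i \ker (t _i)$ is natural with respect to restriction to the open, which is exactly the content of the right-hand square of \ref{Gamme!affi-u!proj}.

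It remains to match the vertical maps. The restriction $\Gamma (\P ^n _k, \E) \to \Gamma (\A ^n _k, \E)$ is $\O _{\PP} (\hdag H _0) _\Q$-linear, hence intertwines the action of each global section $t _i$; therefore it carries $\cap _i \ker (t _i)$ on $\P ^n _k$ into $\cap _i \ker (t _i)$ on $\A ^n _k$, which is the right-hand vertical arrow, while the left-hand vertical arrow is just $j ^{\prime *}$ applied to $\alpha ^! (\E)$ on global sections. Combining this with the naturality established above gives the commutativity of \ref{Gamme!affi-u!proj}. I expect the only genuine point needing care to be the verification that the whole derived computation behind \ref{gamma-u!proj} --- transfer bimodule, derived tensor product, $\mathcal{H} ^0$ and the appeal to Berthelot--Kashiwara --- commutes with the open restriction $j ^*$; once each of these steps is seen to be natural in the evident sense, the two Koszul resolutions being restrictions of one another forces the diagram to commute.
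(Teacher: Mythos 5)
Your proof is correct and follows essentially the same route as the paper, whose entire argument is the one-line remark that commutativity follows from the construction of the two horizontal isomorphisms; you have simply made explicit what that construction is (the common Koszul resolution by the regular sequence $t_1,\dots,t_r$) and why restriction to the affine open transports one identification into the other. No gap.
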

\begin{proof}
Cela découle de la construction des isomorphismes horizontaux.
\end{proof}

\begin{vide}
[Quelques équivalences de catégories]
D'après \cite{caro-construction}, on dispose du foncteur pleinement fidèle $\sp _{Y _{0} \hookrightarrow \U,+}$ 
de la catégorie $\mathrm{Isoc} (Y _{0}/K)$ des isocristaux convergents sur $Y _{0}$ dans celle 
des $\D ^\dag _{\U ,\Q} $-modules cohérents à support dans $Y _{0}$. 
Son image essentielle est constituée par les $\D ^\dag _{\U,\Q} $-modules cohérents à support dans $Y _{0}$ tels que 
$v ^! (\G)$ soit $\O _{\Y,\Q}$-cohérent. 

D'après \cite{caro-pleine-fidelite}, on dispose de l'équivalence 
$\sp _{Y ^\dag \hookrightarrow U ^\dag, T _0,+}\,:\,\mathrm{Isoc} ^{\dag} (Y _{0}/K) \cong \mathrm{Isoc} ^{\dag\dag} (Y _{0}/K)$ 
entre la catégorie des isocristaux surconvergents sur $Y _{0}$ et celle des isocristaux surcohérents sur $Y _{0}$.
Cela correspond à une extension sans Frobenius du théorème analogue de \cite{caro-2006-surcoh-surcv}, ce qui nous permet 
d'obtenir le théorème \ref{pass-cvasurcv} qui suit sans structure de Frobenius (et par la même occasion, il est inutile de se préoccuper de la compatibilité à Frobenius des constructions, ce qui simplifie la preuve). 
\end{vide}

\begin{theo}
\label{pass-cvasurcv}
Soit $\E$ un $\D ^\dag _{\PP} (\hdag H _0) _\Q$-module cohérent
  tel que $\E |\U$ soit dans l'image essentielle de $\sp _{Y _{0} \hookrightarrow \U,+}$.
  Il existe alors un isocristal surconvergent $G$ sur $Y _0$ et un isomorphisme
 $\D ^\dag _{\PP} (\hdag T _0) _\Q$-linéaire :
  \begin{equation}
    \notag
    \sp _{Y ^\dag \hookrightarrow U ^\dag, T _0,+} (G)
    \riso
    \E (\hdag T_0).
  \end{equation}
Autrement dit, $\E (\hdag T_0)\in \mathrm{Isoc} ^{\dag \dag} ( Y _{0}/K)$.
De plus, si $\E$ est muni d'une structure de Frobenius, alors $\E (\hdag T_0)$ est surholonome (voir \cite{caro-Tsuzuki-2008}).
\end{theo}

\begin{proof}
Notons
$A ^\dag := \Gamma (U ^\dag , \O _{U ^\dag})$,
$E:= \Gamma (\PP, \E)$ et
$E ' := D ^\dag _{U^\dag} \otimes _{D^\dag } E$ où
l'extension $\mathrm{restr}\,:\,D ^\dag \rightarrow D ^\dag _{U^\dag}$ choisie pour calculer le produit tensoriel
est celle induite par l'immersion ouverte
$U^\dag \subset A ^{n\dag } _\V$, i.e., celle de \ref{dag->Bert}.
Notons $\widetilde{Y} ^\dag := g  ^{-1} (\A ^{n-r \dag} _\V)$,
$a \,:\,\widetilde{Y} ^\dag \rightarrow \A ^{n-r \dag} _\V$ le morphisme fini étale induit par $g$.
Soient
$w \,:\, Y ^\dag \hookrightarrow \widetilde{Y} ^\dag $
(resp. $\widetilde{v}\,:\, \widetilde{Y} ^\dag \hookrightarrow U ^\dag$)
un relèvement de l'immersion fermée
$Y _0\hookrightarrow \widetilde{Y} _0$
(resp. $\widetilde{Y} _0\hookrightarrow U _0$).
Notons $\beta \,:\, \A ^{n-r\dag } _\V \hookrightarrow \A ^{n\dag } _\V$ 
et $\alpha \,:\, \P ^{n-r\dag } _\V \hookrightarrow \P ^{n\dag } _\V$ 
les immersions fermées canoniques.
Les complétés $p$-adiques des morphismes de $\V$-schémas formels faibles lisses seront désignés abusivement par la même lettre, e.g., 
$\beta \,:\, \widehat{\A} ^{n-r} _\V \hookrightarrow \widehat{\A} ^{n} _\V$
ou $\alpha\,:\,\widehat{\P} ^{n-r} _\V
\hookrightarrow
\widehat{\P} ^{n} _\V$. 
Notons enfin $x _1, \dots, x_n$ les coordonnées locales de $U ^{\dag}$ correspondant via $g^*$
à $t _1, \dots, t_n$. 
Posons $G= \cap _{i=1} ^r \ker (x_i\,:\, E' \rightarrow E')$.

\medskip

\textit{$I)$ Le module $G$ correspond à un isocristal surconvergent sur $Y _0$.}

Une des principales difficultés est d'établir que
le module $G$ est un $\Gamma (Y ^\dag , \O _{Y ^\dag,\Q})$-module cohérent.
La stratégie est de se ramener via le morphisme $g$ au cas où la compactification de $Y _0$ dans $P _0$
est lisse.
On construit un $\D ^\dag _{\PP} (\hdag H_0) _\Q$-module cohérent en posant $\FF:= \D ^\dag _{\PP} (\hdag H_0) _\Q \otimes _{D ^\dag _K }g _{*}(E')$, 
où $g _{*}(E')$ désigne le  $D ^\dag _K $-module cohérent induit par $E'$ via $g$ (voir \ref{conn-finiétal}).

$1)$ {\it Vérifions l'isomorphisme $ \FF |\widehat{\A} ^n _\V \riso g_+( \E |\U)$.}\\
D'après \ref{dagdag-dag}, $\Gamma (\widehat{\A} ^n _\V,\FF)
\riso
D ^\dag _{\widehat{\A} ^n _\V,K}  \otimes _{D ^\dag _K} g _{*}(E')$.
Via \ref{oub-com-otimes}, par associativité du produit tensoriel,
il en résulte
$\Gamma (\widehat{\A} ^n _\V,\FF)
=
D ^\dag _{\U,K }
\otimes _{ D ^\dag _{U^\dag,K}}
E'$.
D'un autre côté,
d'après \ref{dagdag-dag}, on dispose de l'isomorphisme canonique :
$D ^\dag _{\U,K }
\otimes _{ D ^\dag _{K}}
E
\riso
\Gamma (\U, \E) $.
D'où :
$D ^\dag _{\U,K }
\otimes _{ D ^\dag _{U^\dag,K}}
E'
\riso
\Gamma (\U, \E) $.
Nous avons ainsi établi l'isomorphisme
$\Gamma (\widehat{\A} ^n _\V,\FF |\widehat{\A} ^n _\V)
\riso
\Gamma (\widehat{\A} ^n _\V,g_*( \E |\U))$.
Or, comme $g$ est fini et étale,
$g _* (\E |\U)\riso g _+ (\E |\U)$ est
un $\D ^\dag _{\widehat{\A} ^n _\V,\Q}$-module cohérent (en effet,
l'image directe par un  morphisme propre conserve la $\D ^{\dag}$-cohérence).
Donc, d'après le théorème de type $A$
sur les $\D ^\dag _{\widehat{\A} ^n _\V,\Q}$-modules cohérents,
$\FF |\widehat{\A} ^n _\V \riso g_*( \E |\U)$.

$2)$ 
Posons $\H:= v ^! (\E |\U)$. 
On obtient un $\D ^\dag _{\widetilde{\Y},\Q}$-module cohérent
$\O _{\widetilde{\Y},\Q}$-cohérent en posant $\widetilde{\H}:=w _+ (\H)$.
En effet, $\H\in \mathrm{Isoc} ^{\dag \dag} (Y_0,Y_0 /K)$, i.e., $\H$ est un $\D ^\dag _{\Y,\Q}$-module cohérent
$\O _{\Y,\Q}$-cohérent. De plus, comme $\dim \widetilde{Y}_0 = \dim Y _0$ et comme $\widetilde{Y}_0$ est lisse,
$Y _0$ est alors une composante connexe de $\widetilde{Y}_0$.
Il en résulte que $\widetilde{\H} \in \mathrm{Isoc} ^{\dag \dag} (\widetilde{Y}_0,\widetilde{Y}_0 /K)$.

$3)$ On dispose de l'isomorphisme $\FF |\widehat{\A} ^n _\V \riso \beta _+ a _+ (\widetilde{\H})$.
En effet, d'après le théorème de Berthelot-Kashiwara, on dispose de l'isomorphisme canonique
$\E |\U \riso v _+ (\H)$.
Or, d'après l'étape $1)$,  $\FF |\widehat{\A} ^n _\V \riso g_+( \E |\U)$. 
On en déduit :
$\FF |\widehat{\A} ^n _\V \riso g_+  v _+ (\H) \riso  g_+  \widetilde{v} _+ (\widetilde{\H})\riso   \beta _+ a _+ (\widetilde{\H})$.

$4)$ {\it Le faisceau $\FF$ est à support dans $\P ^{n-r} _k$.}\\
Notons $H _1,\dots, H _r$ les hyperplans de $\P ^{n} _k$ correspondants à $u _1=0,\dots, u_r=0$.
Il résulte de l'isomorphisme de l'étape $3)$ que $ \FF |\widehat{\A} ^n _\V $ est à support dans $\A ^{n-r}  _{k}$.
Ainsi, pour tout $s =1,\dots, r$, $\FF (\hdag H _s)$ est un $\D ^\dag _{\PP} (\hdag H _s \cup H _0) _\Q$-module cohérent
nul en dehors de $H _s\cup H_0$. Par \cite[4.3.12]{Be1}, on obtient $\FF (\hdag H _s)=0$.
En utilisant le triangle de localisation en $H _{s}$, on en tire $\R \underline{\Gamma} ^\dag _{H _s} (\FF) \riso \FF$.
D'où $\R \underline{\Gamma} ^\dag _{\P ^{n-r} _k} (\FF) \riso \FF$ (voir \cite[2.2.8]{caro_surcoherent}).
Ainsi, $\FF$ est à support dans $\P ^{n-r} _k$.

$5)$ {\it Établissons que $\FF ,\alpha^!(\FF)  \in \mathrm{Isoc} ^{\dag \dag} (\A ^{n-r} _k, \P ^{n-r} _k /K)=\mathrm{Isoc} ^{\dag \dag} (\A ^{n-r} _k /K)$.}\\
D'après l'étape $4)$, il suffit d'établir
que $\alpha^!(\FF)  \in \mathrm{Isoc} ^{\dag \dag} (\A ^{n-r} _k /K)$.
D'après le théorème de Berthelot-Kashiwara, il résulte de l'étape $4)$ que 
$\alpha^!(\FF)$ est un
$\D ^\dag _{\widehat{\P} ^{n-r} _\V } (\hdag H _0 \cap \P ^{n-r} _k ) _{\Q}$-module cohérent
vérifiant
$\alpha_+ \circ \alpha^! (\FF) \riso \FF$.
D'après la caractérisation \cite[2.2.12]{caro_courbe-nouveau},
pour vérifier que $\alpha^!(\FF) \in \mathrm{Isoc} ^{\dag \dag} (\A ^{n-r} _k /K)$, 
il suffit alors d'établir que 
$\alpha^!(\FF)| \widehat{\A} ^{n-r} _\V$ est 
$\O _{ \widehat{\A} ^{n-r} _\V,\Q}$-cohérent.
Or, $\alpha^! (\FF) |\widehat{\A} ^{n-r} _\V
\riso
\beta ^! (\FF |\widehat{\A} ^n _\V )$.
Avec l'étape $3)$, on en déduit alors
$\alpha^! (\FF) |\widehat{\A} ^{n-r} _\V
\riso
\beta ^!  \beta _+ a _+ (\widetilde{\H})
\riso
a _+ (\widetilde{\H})$.
  De plus, comme $a$ est fini et étale, $a _+ (\widetilde{\H})\riso a _* (\widetilde{\H})$.
Or, d'après l'étape $2)$, $\widetilde{\H}$ est un $\O _{\widetilde{\Y},\Q}$-module cohérent.
Il en résulte que $a _+ (\widetilde{\H})$ est en outre $\O _{\widehat{\A} ^{n-r} _\V,\Q}$-cohérent.

$6)$ {\it Le module $G$ est un isocristal surconvergent sur $\widetilde{Y} _0$ et $\Gamma (\widehat{\P} ^{n-r} _\V, \alpha^!(\FF))
=a _{*} (G)$.}\\ 
a) Comme $\alpha^!(\FF) \in \mathrm{Isoc} ^{\dag \dag} (\A ^{n-r} _k /K)$, 
d'après \ref{Eqii)etiii)},
$\Gamma (\widehat{\P} ^{n-r} _\V, \alpha^!(\FF))$ est
un $D ^{\dag} _{\A ^{n-r \dag} _{\V} ,K}$-module cohérent,
$\V [t_{r+1}, \dots ,t_n] ^\dag \otimes _\V K$-cohérent.
\\
b) D'après \ref{gamma-u!proj},
comme $E' = \Gamma ( \widehat{\P} ^{n} _\V, \FF)$,
on obtient
$\Gamma (\widehat{\P} ^{n-r} _\V, \alpha^!(\FF))
=
\cap _{i=1} ^r \ker (t_i\,:\, E' \rightarrow E')
=
\cap _{i=1} ^r \ker (x_i\,:\, E' \rightarrow E')
=G$.
Comme $\Gamma (\widetilde{Y} ^\dag , \O _{\widetilde{Y} ^\dag})$
est une
$\V [ t_{r+1}, \dots, t_n] ^\dag$-algèbre finie,
il en résulte que 
$G$ est un
$\Gamma (\widetilde{Y} ^\dag , \O _{\widetilde{Y} ^\dag,\Q}) _{K}$-module cohérent.
De plus,
comme $E'$ est un $D _{U ^\dag, K}$-module,
par un calcul en coordonnées locales, 
il en résulte que $G$ est muni d'une structure canonique
de $D _{\widetilde{Y} ^\dag, K}$-module telle que, en notant $a _{*} (G)$ le $D _{\A ^{n-r \dag} _{\V} ,K}$-module cohérent induit
via le morphisme canonique $D_{\A ^{n-r \dag} _{\V} ,K}\to D _{\widetilde{Y} ^\dag, K}$ induit par $a$,
l'égalité $\Gamma (\widehat{\P} ^{n-r} _\V, \alpha^!(\FF))
=a _{*} (G)$ ci-dessus est $D _{\A ^{n-r \dag} _{\V} ,K}$-linéaire.
D'après \ref{pre-Eq-isoc-Ddag}, il en résulte que 
$G$ est un $D ^{\dag}_{\widetilde{Y} ^\dag, K}$-module cohérent, $\Gamma (\widetilde{Y} ^\dag , \O _{\widetilde{Y} ^\dag})_{K}$-cohérent.

$7)$ {\it L'isocristal convergent sur $\widetilde{Y} _{0}$ induit par $G$ est isomorphe à
$\Gamma (\widetilde{\Y}, \widetilde{\H})$} (rappelons que d'après l'étape $2)$ 
$\Gamma (\widetilde{\Y}, \widetilde{\H})$ est un isocristal convergent sur $\widetilde{Y} _0$).\\ 
a) D'après \ref{oub-com-otimes}, le morphisme canonique de $(D ^\dag _{\widehat{\A} ^n _\V ,K} , D ^\dag _{U^\dag ,K})$-bimodules
$D ^\dag _{\widehat{\A} ^n _\V ,K}  \otimes _{D ^\dag _K}
D ^\dag _{U^\dag ,K}
\rightarrow
D ^\dag _{\U ,K}$
est un isomorphisme.
On en déduit l'isomorphisme $D ^\dag _{\widehat{\A} ^n _\V,K} $-linéaire :
$D ^\dag _{\U,K }
\otimes _{ D ^\dag _{U^\dag,K}}
E' \liso
D ^\dag _{\widehat{\A} ^n _\V,K}  \otimes _{D ^\dag _K} E' $.
Avec \ref{Gamme!affi} et via le théorème $A$ pour les $\D ^{\dag}_{\widehat{\A} ^{n-r}, \Q}$-modules cohérents
(resp. $\D ^{\dag}_{\widehat{\A} ^{n}, \Q}$-modules cohérents),
en lui appliquant le foncteur $? \mapsto \cap _{i=1} ^r \ker (t_i\,:\, ? \to ?)$, on obtient le morphisme 
$D ^{\dag}_{\widehat{\A} ^{n-r}, K}$-linéaire du bas du diagramme suivant
\begin{equation}
  \label{diag-pass-cvasurcv}
  \xymatrix {
  {G=\cap _{i=1} ^r \ker (x_i\,:\, E' \rightarrow E') }
  \ar@{=}[r]
  \ar@{^{(}->}[d]
  &
  {\cap _{i=1} ^r \ker (t_i\,:\, E' \rightarrow E')}
  \ar@{^{(}->}[d]
\\
  {\cap _{i=1} ^r\ker (x_i\,:\, D ^\dag _{\U,K }
\otimes _{ D ^\dag _{U^\dag,K}}
E' \rightarrow D ^\dag _{\U,K }
\otimes _{ D ^\dag _{U^\dag,K}}
E')}
  &
  {\cap _{i=1} ^r \ker (t_i\,:\, D ^\dag _{\widehat{\A} ^n _\V,K}  \otimes _{D ^\dag _K} E' \rightarrow
  D ^\dag _{\widehat{\A} ^n _\V,K}  \otimes _{D ^\dag _K} E').}
  \ar[l] ^-\sim
  }
\end{equation}
Via un calcul imédiat, ce diagramme est commutatif.\\
b) Le terme en bas à gauche de \ref{diag-pass-cvasurcv}
est canoniquement isomorphe à $\Gamma (\widetilde{\Y}, \widetilde{\H})$.
En effet, 
comme $\widetilde{\H}\riso \widetilde{v} ^! (\E |\U)$,
il résulte de \ref{Gamme!affi} l'isomorphisme :
$\Gamma (\widetilde{\Y}, \widetilde{\H})
\riso
\cap _{i=1} ^r \ker (x_i\,:\,
\Gamma (\U, \E) \rightarrow \Gamma (\U, \E))$.
Or, on a vérifié au cours de la preuve de l'étape $1)$ l'isomorphisme $D ^\dag _{\U,K }
\otimes _{ D ^\dag _{U^\dag,K}}
E'
\riso
\Gamma (\U, \E) $. D'où le résultat. \\
c) On calcule de plus que
la flèche de gauche (resp. de droite) de \ref{diag-pass-cvasurcv} est
$D _{\widetilde{Y} ^\dag, K}$-linéaire (resp. $D _{\A ^{n-r \dag}, K}$-linéaire).\\
d) Via le théorème de type $A$ (plus précisément \ref{dagdag-dag}), 
on vérifie que l'injection
$\Gamma (\widehat{\P} ^{n} _\V, \alpha^!(\FF))
\subset
\Gamma (\widehat{\A} ^{n} _\V, \alpha^!(\FF))$ s'identifie au morphisme canonique
$E' \to D ^\dag _{\widehat{\A} ^n _\V,K}  \otimes _{D ^\dag _K} E'$.
D'après \ref{Gamme!affi-u!proj}, il en résulte que la flèche de droite de \ref{diag-pass-cvasurcv} est isomorphe
à l'injection
$\Gamma (\widehat{\P} ^{n-r} _\V, \alpha^!(\FF))
\subset
\Gamma (\widehat{\A} ^{n-r} _\V, \alpha^!(\FF))$
de l'isocristal surconvergent sur $\A ^{n-r} _k$ associé à $\alpha^!(\FF)$
dans l'isocristal convergent sur $\A ^{n-r} _k$ induit.
Comme l'injection canonique $\Gamma (\widehat{\P} ^{n-r} _\V, \alpha^!(\FF))
\subset
\Gamma (\widehat{\A} ^{n-r} _\V, \alpha^!(\FF))$
induit par extension l'isomorphisme 
$\V \{ t_{r+1}, \dots, t_n\}\otimes _{\V [ t_{r+1}, \dots, t_n] ^\dag}\Gamma (\widehat{\P} ^{n-r} _\V, \alpha^!(\FF))
\riso
\Gamma (\widehat{\A} ^{n-r} _\V, \alpha^!(\FF))$, 
on en déduit que l'injection 
$G \hookrightarrow \Gamma (\widetilde{\Y}, \widetilde{\H})$ (flèche de gauche de \ref{diag-pass-cvasurcv} modulo l'isomorphisme de 7.b))
induit par extension l'isomorphisme
$$\V \{ t_{r+1}, \dots, t_n\}\otimes _{\V [ t_{r+1}, \dots, t_n] ^\dag}G
\riso \Gamma (\widetilde{\Y}, \widetilde{\H}).$$
\\
e) Via a), b), c), d), on conclut grâce à la remarque \ref{conn-finiétal}.

$8)$ {\it Le module $G$ est aussi un isocristal surconvergent sur $Y _0$.}\\
Comme $\widetilde{\H}=w _+ (\H)$, 
il résulte de $7)$ que les restrictions de $G$
sur les composantes connexes $\widetilde{Y} _0$ distinctes de $Y _0$ sont nulles.
Ainsi, $G$ est un isocristal surconvergent sur $Y _0$.

\medskip

\textit{$II)$ Construction de l'isomorphisme $\sp _{Y ^\dag \hookrightarrow U ^\dag, T _0,+} (G)
\riso \E (\hdag T_0)$.}

Soient $\G := \D _{Y^\dag,\Q} \otimes _{D _{Y^\dag,K}} G$,
$v _+ (\G):= v _{*} (\D _{U ^\dag \leftarrow Y^\dag,\Q}
\otimes _{\D _{Y^\dag,\Q}} \G) $
et
$v _+ (G):= \Gamma (U^\dag, \D _{U ^\dag \leftarrow Y^\dag,\Q})
\otimes _{D _{Y^\dag,K}} G $.
Alors, par (passage de droite à gauche de) \cite[2.4.1]{caro_devissge_surcoh},
$v _+ (G) \riso \Gamma(U ^\dag, v _+ (\G))$
et $v_+ (\G) \riso \D _{U^\dag} \otimes _{D _{U^\dag }} v_+(G)$.

Soit $j\,:\, U ^\dag \subset \P ^{n\dag} _\V$ l'immersion ouverte.
On dispose, pour tout $D _{U^\dag }$-module $M$, d'un morphisme
$j_* \D _{U^\dag,\Q} \otimes _{D _{U^\dag ,K}} M \rightarrow
j_* (\D _{U^\dag,\Q} \otimes _{D _{U^\dag ,K}} M )$
fonctoriel en $M$.
Lorsque $M= D _{U^\dag , K}$, celui-ci est un isomorphisme.
En appliquant ces deux foncteurs à une présentation finie de $v_+(G)$,
on obtient un morphisme entre deux présentations finies. Par le lemme des cinq, il en résulte
l'isomorphisme
$j_* \D _{U^\dag,\Q} \otimes _{D _{U^\dag ,K}} v_+(G) \riso
j_* (\D _{U^\dag,\Q} \otimes _{D _{U^\dag ,K}}v_+(G) )$.
Donc,
 $j_* v_+ (\G) \liso j_* \D _{U^\dag,\Q} \otimes _{D _{U^\dag ,K}} v_+(G) $.
D'où :
\begin{equation}
\label{pass-cvasurcv-isopart1}
  \sp _{Y ^\dag \hookrightarrow U ^\dag, T _0,+} (G)=
\D ^\dag _{\PP } (\hdag T _0) _\Q \otimes _{j_* \D _{U^\dag,\Q} } j_* v_+ (\G)
\liso
\D ^\dag _{\PP } (\hdag T _0) _\Q
\otimes _{D _{U^\dag ,K}} v_+(G).
\end{equation}

Posons $\widetilde{\G}: = w _+ (\G)$ l'image directe de $\G$ par $w$ (attention, en tant que $\D$-module et non $\D ^{\dag}$-module), 
$\widetilde{v} _{+} (G):=
\Gamma (U^\dag, \D _{U ^\dag \leftarrow \widetilde{Y}^\dag,\Q})
\otimes _{D _{\widetilde{Y}^\dag,K}} G$.
Soient
$\partial _1, \dots, \partial _n$
les dérivations correspondantes aux coordonnées locales $x _1, \dots, x_n$.
Un calcul classique donne $\widetilde{v} _{+} (G)\riso K [ \partial _1 , \dots, \partial _r ]
\otimes _K G $ et,
comme $G =\cap _{i=1} ^r \ker (x_i\,:\, E' \rightarrow E')$,
on dispose alors du morphisme canonique $D _{U^\dag,K}$-linéaire :
$\widetilde{v} _{+} (G) \rightarrow E'$ (défini par
$\partial _i \otimes x \mapsto \partial _i \cdot x$).

Par transitivité de l'image directe, on obtient $v _+ (\G) \riso \widetilde{v} _+ (\widetilde{\G})$.
Comme $\widetilde{\G}$ est un $\D _{\widetilde{Y}^\dag,\Q} $-module cohérent
tel que
$\Gamma (\widetilde{Y}^\dag,\widetilde{\G}) =
\Gamma (Y^\dag,\G)=G $, on en déduit (grâce à nouveau à  \cite[2.4.1]{caro_devissge_surcoh}) :
$v _+ (G) \riso \widetilde{v} _{+} (G)$.
D'où le morphisme $D _{U^\dag,K}$-linéaire :
$v _{+} (G) \rightarrow E'$.

Il en dérive le morphisme $D ^\dag _{U^\dag,K}$-linéaire :
$D ^\dag _{U^\dag,K} \otimes _{D _{U^\dag,K}} v _+ (G) \rightarrow E'=
D ^\dag _{U^\dag,K} \otimes _{D^\dag _K} E$.
Or, on bénéficie d'après \cite[2.7.3.(ii)]{huyghe-comparaison}
de l'injection canonique:
$j_* \D ^\dag _{U^\dag,\Q} \rightarrow \D ^\dag _{\PP } (\hdag T _0) _\Q$.
D'où:
$\Gamma (U ^\dag, \D ^\dag _{U^\dag,\Q} ) \rightarrow
\Gamma (\PP, \D ^\dag _{\PP } (\hdag T _0) _\Q)$.
Comme on dispose des morphismes canoniques
$D ^\dag _{U^\dag,\Q}  \rightarrow
\Gamma (U ^\dag, \D ^\dag _{U^\dag,\Q} ) $
et
$\Gamma (\PP, \D ^\dag _{\PP } (\hdag T _0) _\Q)
\rightarrow
\D ^\dag _{\PP } (\hdag T _0) _\Q$, il en dérive par composition:
$D ^\dag _{U^\dag,\Q}  \rightarrow
\D ^\dag _{\PP } (\hdag T _0) _\Q$.
En appliquant le foncteur $\D ^\dag _{\PP } (\hdag T _0) _\Q \otimes _{D ^\dag _{U^\dag,K} }-$
à
$D ^\dag _{U^\dag,K} \otimes _{D _{U^\dag,K}} v _+ (G) \rightarrow 
D ^\dag _{U^\dag,K} \otimes _{D^\dag _K} E$,
on obtient:
$\D ^\dag _{\PP } (\hdag T _0) _\Q \otimes _{D _{U^\dag,K}} v _+ (G)
\rightarrow
\D ^\dag _{\PP } (\hdag T _0) _\Q
\otimes _{D^\dag _K} E$.
Or, 
$
\D ^\dag _{\PP } (\hdag T _0) _\Q \otimes _{D^\dag _K} E \riso
\D ^\dag _{\PP } (\hdag T _0) _\Q
\otimes _{\D ^\dag _{\PP } (\hdag H _0) _\Q} \D ^\dag _{\PP } (\hdag H _0) _\Q
\otimes _{D^\dag _K} E
\riso
\D ^\dag _{\PP } (\hdag T _0) _\Q
\otimes _{\D ^\dag _{\PP } (\hdag H _0) _\Q} \E
=
\E (\hdag T_0)$, le dernier isomorphisme résultant de 
\ref{TheA-Adag}.
D'où :
\begin{equation}
\label{pass-cvasurcv-isopart2}
\D ^\dag _{\PP } (\hdag T _0) _\Q \otimes _{D _{U^\dag,K}} v _+ (G)
\rightarrow
\E (\hdag T_0) .
\end{equation}

En composant \ref{pass-cvasurcv-isopart1} et \ref{pass-cvasurcv-isopart2}, on obtient  
le morphisme canonique 
$\phi\,:\,\sp _{Y ^\dag \hookrightarrow U ^\dag, T _0,+} (G)
\rightarrow \E (\hdag T_0)$.
Via \cite[5.2.4]{caro_devissge_surcoh}, on vérifie que
$\phi$ est un isomorphisme en dehors de $T _0$.
D'après \cite[4.3.12]{Be1}, comme
$\phi$
est un morphisme de
$\D ^\dag _{\PP} (\hdag T_0) _\Q$-modules cohérents,
il en résulte que
$\phi$ est un isomorphisme.

\end{proof}

\begin{rema}
D'après \cite[2.7.3.(ii)]{huyghe-comparaison}
on dispose de l'injection canonique:
$j_* \D ^\dag _{U^\dag,\Q} \rightarrow \D ^\dag _{\PP } (\hdag T _0) _\Q$. Comme $T _{0}$ n'est pas forcément ample, on ne peut directement
conclure de \cite{huyghe-comparaison} que cette injection induit par passage aux sections globales un isomorphisme.
De plus, il n'est pas sûr que l'on puisse dans ces conditions disposer de théorème $A$ pour les $\D ^\dag _{\PP } (\hdag T _0) _\Q$-modules cohérents.
Comme la preuve de \ref{pass-cvasurcv} est de passer aux sections globales pour travailler avec les faisceaux des opérateurs différentiels à la Mebkhout-Narvaez, à moins de disposer d'une idée nouvelle, l'hypothèse que le $\D ^\dag _{\PP } (\hdag T _0) _\Q$-module cohérent $\E (\hdag T _{0})$ provienne par extension d'un $\D ^\dag _{\PP } (\hdag H _0) _\Q$-module cohérent est indispensable (ce qui est important en fait est que $H _{0}$ soit ample).
\end{rema}

\section{Stabilité de l'holonomie}

\subsection{Résolution des conjectures de Berthelot sur la stabilité de l'holonomie pour les $\V$-schémas formels projectifs et lisses}
Le théorème \ref{conjD-proj} ci-après signifie que la conjecture
\cite[5.3.6.D)]{Beintro2} de Berthelot est vérifiée lorsque le diviseur est ample.
\begin{theo}
  \label{conjD-proj}
  Soient $\PP$ un $\V$-schéma formel propre et lisse,
  $H _0$ un diviseur ample de $P _0$, $\mathfrak{A}$ l'ouvert de $\PP$ complémentaire de $H _0$.
  Soit $ \E \in F \text{-}D ^\mathrm{b} _\mathrm{coh} (\D ^\dag _{\PP} (\hdag H _0) _\Q)$ tel que 
  $\E |\mathfrak{A} \in F \text{-}D ^\mathrm{b} _\mathrm{hol} (\D ^\dag _{\mathfrak{A},\Q})$.
  Alors $\E \in F \text{-}D ^\mathrm{b} _\mathrm{surhol} (\D ^\dag _{\PP,\Q})$.
\end{theo}

\begin{proof}

Il existe une immersion fermée $\alpha _{0}\,:\,P _0 \hookrightarrow \P ^n _k$ telle que
$(\P ^n _k \setminus \A _k ^n) \cap P _0 = H _0$.
D'après le théorème de Berthelot-Kashiwara 
$\alpha _0 ^! \circ \alpha _{0+} (\E) \riso \E$. De plus $\E  |\mathfrak{A}$ 
est holonome si et seulement si $\alpha _{0+} (\E) $ (pour la version holonome du théorème de Berthelot-Kashiwara, voir par exemple \cite[1.14]{caro_surholonome}).
Comme $\alpha _{0+} (\E)$ est un $F \text{-}\D ^\dag _{\widehat{\P} ^n _\V} (\hdag \P ^n _k \setminus \A _k ^n) _\Q$-module cohérent tel que 
$\E |\widehat{\A} ^n _\V$ est un $F \text{-}\D ^\dag _{\widehat{\A} ^n _\V,\Q}$-module holonome,
comme la surholonomie est préservée par image inverse extraordinaire,
on se ramène ainsi au cas où $\PP= \widehat{\P} ^n _\V$ et $H _0=\P ^n _k \setminus \A _k ^n $.

Nous procédons à présent par récurrence sur l'ordre lexicographique
  $(\dim \mathrm{Supp} (\E), N _{\mathrm{cmax}})$,
  où $\dim \mathrm{Supp} (\E)$ désigne la dimension du support de $\E$ et
  $N _{\mathrm{cmax}}$ signifie le nombre de composantes irréductibles du support de $\E$
  dont la dimension vaut $\dim \mathrm{Supp} (\E)$, i.e., de dimension maximale.
  Le cas où $\dim \mathrm{Supp} (\E)\leq 1$ est déjà connu (voir \cite[2.3.15]{caro-Tsuzuki-2008}).
  Supposons donc $\dim \mathrm{Supp} (\E)\geq 1$.

Pour tout entier $j$, $\mathcal{H} ^{j} (\E)$ est un $F \text{-}\D ^\dag _{\PP} (\hdag H _0) _\Q$-module cohérent tel que 
$\E |\mathfrak{A}$ est un $F \text{-}\D ^\dag _{\mathfrak{A},\Q}$-module holonome. 
Or, pour établir que $\E \in F \text{-}D ^\mathrm{b} _\mathrm{surhol} (\D ^\dag _{\PP,\Q})$, il suffit 
de vérifier que, pour tout entier $j$, $\mathcal{H} ^{j} (\E)$ est un $\D ^\dag _{\PP,\Q}$-module surholonome. 
  On se ramène ainsi à supposer que le complexe $\E$ est réduit à un terme.

Notons $X _0 $ le support de $\E$.
Soit $\U$ un ouvert affine de $\PP$ inclus dans $\mathfrak{A}$
tel que $Y_0:= X_0 \cap U_0$
soit intègre, lisse et dense dans une composante irréductible de $X$ de dimension $\dim X$.
Grâce à Elkik (voir \cite{elkik}, il existe un relèvement $v \,:\, \Y \hookrightarrow \U$
de l'immersion fermée $Y _0 \hookrightarrow U _0$.
D'après le théorème de Berthelot-Kashiwara,
comme $\E |\U$ est holonome et à support dans $Y _0$,
$v ^! (\E |\U)$ est un $F$-$\D ^\dag _{\Y,\Q}$-module holonome.
Par \cite[5.3.5.(i)]{Beintro2},
quitte à rétrécir $\U$ et $\Y$,
on peut supposer que
$v ^! (\E |\U)$ est $\O _{\Y,\Q}$-cohérent.
Notons $T _0 $ le diviseur réduit de $P _0 $ complémentaire de $U _0$ (voir \cite[1.3.1]{caro-2006-surcoh-surcv}).
On peut supposer $\U$ muni de coordonnées locales $x _1,\dots, x _n$ telles que
$\Y$ soit défini par l'idéal engendré par $x_1,\dots, x_r$.
En outre, via \cite[Theorem 2]{Kedlaya-coveraffinebis}
(appliqué au point $0$ et avec les diviseurs irréductibles définis par $x_1=0,\dots, x _r=0$),
quitte à nouveau à rétrécir $U _0$,
on peut supposer qu'il existe un morphisme fini et étale $g_0\,:\, U _0 \rightarrow \A ^n _k $ tel que
  $g _0 (Y _0) \subset \A ^{n-r} _k$.
Grâce au théorème \ref{pass-cvasurcv},
il en résulte que
$\E (\hdag T _0)$ est surholonome.
On conclut la récurrence en utilisant le triangle de localisation
de $\E$ en $T _0$.
\end{proof}

\begin{rema}
\label{ss-Frob-rema}
Le théorème \ref{conjD-proj} est faux si le complexe $\E$ n'est pas muni d'une structure de Frobenius (pour une version sans structure de Frobenius 
de la notion d'holonomie, on pourra consulter \cite{caro-pleine-fidelite}), même en remplaçant l'hypothèse 
{\og $\E |\mathfrak{A} \in F \text{-}D ^\mathrm{b} _\mathrm{hol} (\D ^\dag _{\mathfrak{A},\Q})$ \fg}
par 
{\og $\E |\mathfrak{A} \in F \text{-}D ^\mathrm{b} _\mathrm{surhol} (\D ^\dag _{\mathfrak{A},\Q})$ \fg}. 
En effet, pour ce genre de contre-exemple (i.e., que se passe sans structure de Frobenius ?), il s'agit de revenir
à l'exemple donné tout à la fin de \cite{Be1} par Berthelot ; situation où 
$\PP = \widehat{\P} ^{1} _{V}$ et $A= \mathbb{G} _{m,k}$.
D'après cet exemple, il existe un $\D ^\dag _{\PP} (\hdag H _0) _\Q$-module cohérent $\E$
tel que $\E |\mathfrak{A}$ soit $\O _{\mathfrak{A},\Q}$-cohérent et donc $\D ^\dag _{\mathfrak{A},\Q}$-holonome
et $\D ^\dag _{\mathfrak{A},\Q}$-surholonome. Par contre, ce module $\E$ n'est même pas $\D ^\dag _{\PP,\Q}$-cohérent.

Avec ce même contre-exemple, le théorème \ref{conjD-proj-extr} est faux sans structure de Frobenius.
\end{rema}

\begin{coro}
  \label{coro-conjD-proj}
  Soient $\PP$ un $\V$-schéma formel propre et lisse,
  $H _0$ un diviseur ample de $P _0$, $\mathfrak{A}$ l'ouvert de $\PP$ complémentaire de $H _0$,
  $Y _0$ un sous-schéma fermé lisse de $A _0$.

   Il existe une équivalence entre
  la catégorie $F\text{-}\mathrm{Isoc} ^\dag (Y _0/K)$
  des $F$-isocristaux surconvergents sur $Y _0$
  et la catégorie des
  $ F \text{-}\D ^\dag _{\PP} (\hdag H _0) _\Q$-modules cohérents $\E$ tels
   que $\E |\mathfrak{A}$ soit dans l'image essentielle de $\sp _{Y_0 \hookrightarrow \mathfrak{A},+}$.
\end{coro}

\begin{proof}
D'après \cite[2.3.1]{caro-2006-surcoh-surcv}, on dispose d'une équivalence entre la catégorie
  des $F$-isocristaux surconvergents sur $Y _0$ et
  celle des $F$-isocristaux surcohérents sur $Y _0$.
  Or, d'après \cite{caro_surholonome}, les modules dans l'image de $\sp _{Y_0 \hookrightarrow \mathfrak{A},+}$ sont holonomes. 
Par \ref{conjD-proj}, cela implique qu'un $ F \text{-}\D ^\dag _{\PP} (\hdag H _0) _\Q$-module cohérent $\E$ tel
   que $\E |\mathfrak{A}$ soit dans l'image essentielle de $\sp _{Y_0 \hookrightarrow \mathfrak{A},+}$
   est un $F$-isocristal surcohérent sur $Y _{0}$. La réciproque est immédiate.

\end{proof}

Le théorème \ref{conjD-proj} reste valable en remplaçant {\og holonome \fg} par {\og à fibres extraordinaires finies\fg} (voir la définition \cite[2.1]{caro_surholonome} ou \cite[1.3.1]{caro_caract-surcoh}) :
\begin{theo}
  \label{conjD-proj-extr}
  Soient $\PP$ un $\V$-schéma formel propre et lisse,
  $H _0$ un diviseur ample de $P _0$, $\mathfrak{A}$ l'ouvert de $\PP$ complémentaire de $H _0$ et
  $ \E \in F \text{-}D ^\mathrm{b} _\mathrm{coh} (\D ^\dag _{\PP} (\hdag H _0) _\Q)$.
  Si $\E |\mathfrak{A}$ est à fibres extraordinaires finies
  alors $\E \in F \text{-}D ^\mathrm{b} _\mathrm{surhol} (\D ^\dag _{\PP,\Q})$.
\end{theo}

\begin{proof}
  La preuve est analogue à celle de \ref{conjD-proj} mais avec
  quelques modifications : on se ramène de même
  au cas où $P _0 = \P ^n _k$ et $H _0=\P ^n _k \setminus \A _k ^n $.
  On procède toujours par récurrence sur l'ordre lexicographique
  $(\dim \mathrm{Supp} (\E), N _{\mathrm{cmax}})$. Le cas $\dim \mathrm{Supp} (\E)\leq 1$ résulte toujours
\cite[2.3.15]{caro-Tsuzuki-2008}. La différence ici est que l'on ne peut pas directement se ramener au cas où $\E$ est un module
  car la propriété d'être à fibres extraordinaires finies n'est pas {\og a priori\fg} vérifiée pour les
  espaces de cohomologie $\H ^l (\E)$, $l\in \Z$.
Notons $X _0 $ le support de $\E$.
En remplaçant \cite[5.3.5.(i)]{Beintro2} par \cite[1.3.4]{caro_caract-surcoh} (avec l'égalité \cite[1.3.2]{caro_caract-surcoh}),
on vérifie de manière analogue (à la preuve de \ref{conjD-proj})
qu'il existe un ouvert affine $\U$ de $\PP$ inclus dans $\mathfrak{A}$
tel que $Y_0:= X_0 \cap U_0$
soit intègre, lisse et dense dans une composante irréductible de $X$ de dimension $\dim X$
et tel que les espaces de cohomologie de
$v ^! (\E |\U)$ sont $\O _{\Y,\Q}$-cohérents.
Or, d'après le théorème de Berthelot-Kashiwara, pour tout entier $r$,
$v ^!(\mathcal{H} ^r (\E) |\U) \riso \mathcal{H} ^r  (v ^!(\E |\U) )$, qui
est donc $\O _{\Y,\Q}$-cohérent.
Notons $T _0 $ le diviseur réduit de $P _0 $ complémentaire de $U _0$.
En utilisant \cite[Theorem 2]{Kedlaya-coveraffinebis},
quitte à nouveau à rétrécir $U _0$,
on peut supposer qu'il existe un morphisme fini et étale $g_0\,:\, U _0 \rightarrow \A ^n _k $ tel que
  $g _0 (Y _0) \subset \A ^{n-r} _k$.
Via le théorème \ref{pass-cvasurcv},
il en résulte que
$(\mathcal{H} ^r \E )(\hdag T _0)$ est surholonome.
Comme $(\mathcal{H} ^r \E )(\hdag T _0) \riso
\mathcal{H} ^r  (\E (\hdag T _0))$,
le complexe $\E (\hdag T _0)$ est donc surholonome.
On conclut la récurrence en utilisant le triangle de localisation
de $\E$ en $T _0$.
\end{proof}

\begin{theo}
  \label{holo=surholo-proj}
  Soient $\PP$ un $\V$-schéma formel projectif et lisse,
  $\E \in F\text{-}D ^\mathrm{b} _\mathrm{coh} (\D ^\dag _{\PP,\Q})$.
  Les assertions suivantes sont équivalentes :
  \begin{enumerate}
    \item \label{1} Le $F\text{-}$complexe $\E$ appartient à $F\text{-}D ^\mathrm{b} _\mathrm{hol} (\D ^\dag _{\PP,\Q})$.
    \item \label{2} Le $F\text{-}$complexe $\E$ est à fibres extraordinaires finies.
    \item \label{3} Le $F\text{-}$complexe $\E$ appartient à $F\text{-}D ^\mathrm{b} _\mathrm{surcoh} (\D ^\dag _{\PP,\Q})$.
    \item \label{4} Le $F\text{-}$complexe $\E$ appartient à $F\text{-}D ^\mathrm{b} _\mathrm{surhol} (\D ^\dag _{\PP,\Q})$.
  \end{enumerate}
\end{theo}

\begin{proof}
D'après \cite{caro-Tsuzuki-2008}, on sait déjà $\ref{4} \Leftrightarrow \ref{3} $.
L'implication $\ref{3}\Rightarrow \ref{2}$ est claire.
Supposons que $\E \in F\text{-}D ^\mathrm{b} _\mathrm{surcoh} (\D ^\dag _{\PP,\Q})$.
Pour tout entier $j$, $\mathcal{H} ^{j} (\E)$ est $ F\text{-}\D ^\dag _{\PP,\Q}$-module surcohérent, 
et donc surholonome avec $\ref{4} \Leftrightarrow \ref{3} $. 
D'après \cite[2.5]{caro_surholonome},
on en déduit que $\mathcal{H} ^{j} (\E)$ est holonome. 
D'où l'implication
  $\ref{3}\Rightarrow \ref{1}$.
  Prouvons à présent $\ref{1} \Rightarrow \ref{4} $.
  Supposons donc $\E \in F\text{-}D ^\mathrm{b} _\mathrm{hol} (\D ^\dag _{\PP,\Q})$.
  Comme $\PP$ est projectif, d'après le théorème de Berthelot-Kashiwara,
  on se ramène au cas où $\PP= \widehat{\P} ^n _\V$.
Soit $\H $ l'hyperplan de $\widehat{\P} ^n _\V$ défini par $u_0=0$, i.e. $H_0 :=\P ^n _k \setminus \A _k ^n$.
Alors, d'après \ref{conjD-proj},
$\E (\hdag H _0)$ est surholonome.
Via le triangle de localisation de $\E$ en $H_0$,
il en résulte que $\R \underline{\Gamma} ^\dag _{H _0} (\E)$ est aussi holonome.
En notant $\alpha\,:\, \H \hookrightarrow \widehat{\P} ^n _\V$ l'immersion fermée canonique,
on obtient $\alpha ^! (\E)\riso \alpha ^!(\R \underline{\Gamma} ^\dag _{H _0} (\E))$.
D'après la version holonome du théorème de Berthelot-Kashiwara (voir \cite[1.14]{caro_surholonome}),
on en déduit que $\alpha ^! (\E)$ est holonome.
En procédant par récurrence sur $n$,
on obtient alors la surholonomie de $\alpha ^! (\E)$.
D'où la surholonomie de $\R \underline{\Gamma} ^\dag _{H _0} (\E)\riso \alpha _+ \alpha ^! (\E)$.
Via le triangle de localisation de $\E$ en $H_0$,
il en résulte que $\E$ est aussi surholonome.
Nous avons donc prouvé $\ref{1} \Rightarrow \ref{4} $.
Enfin, pour établir l'implication $\ref{2} \Rightarrow \ref{4} $
on procède de manière analogue à la preuve de $\ref{1} \Rightarrow \ref{4} $
modulo le remplacement de l'utilisation du théorème \ref{conjD-proj}
par \ref{conjD-proj-extr}.
\end{proof}

D'après la stabilité de la surholonomie (voir \cite{caro_surholonome}), on obtient
le corollaire suivant qui répond positivement dans le cas projectif aux conjectures
\cite[5.3.6.A),B)]{Beintro2} de Berthelot.
\begin{coro}
\label{coro}
Soient $f\,:\, \PP ' \rightarrow \PP$ un morphisme de $\V$-schémas formels projectifs lisses,
$\E \in F\text{-}D ^\mathrm{b} _\mathrm{hol} (\D ^\dag _{\PP,\Q})$,
$\E '\in F\text{-}D ^\mathrm{b} _\mathrm{hol} (\D ^\dag _{\PP',\Q})$.
Alors $f _+ (\E') \in F\text{-}D ^\mathrm{b} _\mathrm{hol} (\D ^\dag _{\PP,\Q})$
et $f ^! (\E) \in F\text{-}D ^\mathrm{b} _\mathrm{hol} (\D ^\dag _{\PP',\Q})$.
\end{coro}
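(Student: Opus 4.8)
Le plan est d'exploiter le th\'eor\`eme \ref{holo=surholo-proj}, qui identifie l'holonomie et la surholonomie sur les $\V$-sch\'emas formels projectifs et lisses. Comme $\PP$ et $\PP'$ sont tous deux projectifs et lisses, l'hypoth\`ese $\E \in F\text{-}D ^\mathrm{b} _\mathrm{hol} (\D ^\dag _{\PP,\Q})$ \'equivaut \`a $\E \in F\text{-}D ^\mathrm{b} _\mathrm{surhol} (\D ^\dag _{\PP,\Q})$ (l'\'equivalence \ref{1}$\Leftrightarrow$\ref{4} du th\'eor\`eme), et de m\^eme $\E'$ holonome \'equivaut \`a $\E'$ surholonome sur $\PP'$. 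Ainsi la premi\`ere \'etape consiste simplement \`a reformuler les donn\'ees en termes de surholonomie, o\`u l'on dispose d'une th\'eorie de stabilit\'e compl\`ete.

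J'invoquerais ensuite la stabilit\'e de la surholonomie par les six op\'erations cohomologiques de Grothendieck \'etablie dans \cite{caro_surholonome}. Puisque $f\,:\,\PP' \rightarrow \PP$ est un morphisme de $\V$-sch\'emas formels \emph{propres} (car projectifs), l'image directe par un morphisme propre pr\'eserve la surholonomie, donc $f _+ (\E')$ est surholonome sur $\PP$ ; de m\^eme, l'image inverse extraordinaire pr\'eserve la surholonomie, donc $f ^! (\E)$ est surholonome sur $\PP'$.

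Il reste enfin \`a repasser de la surholonomie \`a l'holonomie. Comme $\PP$ et $\PP'$ sont projectifs et lisses, on applique une seconde fois le th\'eor\`eme \ref{holo=surholo-proj} (implication \ref{4}$\Rightarrow$\ref{1}) aux deux objets obtenus, ce qui donne directement $f _+ (\E') \in F\text{-}D ^\mathrm{b} _\mathrm{hol} (\D ^\dag _{\PP,\Q})$ et $f ^! (\E) \in F\text{-}D ^\mathrm{b} _\mathrm{hol} (\D ^\dag _{\PP',\Q})$, d'o\`u le r\'esultat.

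Il n'y a donc pas d'obstacle technique r\'eel dans cette preuve : tout le contenu a \'et\'e concentr\'e en amont dans le th\'eor\`eme \ref{holo=surholo-proj}, lui-m\^eme reposant sur \ref{conjD-proj} et \ref{conjD-proj-extr}. Le seul point \`a s'assurer est que les hypoth\`eses de la stabilit\'e de \cite{caro_surholonome} sont remplies, en particulier que $f$ est propre, ce qui d\'ecoule imm\'ediatement de la projectivit\'e de $\PP$ et $\PP'$ ; c'est pr\'ecis\'ement le passage par la surholonomie qui permet de contourner le fait que la classe des complexes holonomes n'est pas connue \`a elle seule \^etre stable par ces op\'erations.
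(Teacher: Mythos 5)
Votre preuve est correcte et suit exactement la d\'emarche du papier : celui-ci d\'eduit le corollaire pr\'ecis\'ement de la stabilit\'e de la surholonomie \'etablie dans \cite{caro_surholonome} combin\'ee au th\'eor\`eme \ref{holo=surholo-proj}, qui identifie holonomie et surholonomie sur les $\V$-sch\'emas formels projectifs et lisses. L'aller-retour holonome $\Rightarrow$ surholonome $\Rightarrow$ stabilit\'e par $f_+$ (morphisme propre) et $f^!$ $\Rightarrow$ holonome est bien l'argument voulu.
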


\subsection{Stabilité de l'holonomie par les six opérations de Grothendieck sur les $k$-variétés quasi-projectives}

\begin{nota}
\label{defi-hol-PTX}
Soit $Y$ une variété sur $k$. 
On suppose qu'il existe
 un $\V$-schéma formel $\PP$ projectif et lisse, un diviseur $T$ de $P$, un sous-schéma fermé $X$ de $P$ tels que $Y := X \setminus T$.
On notera $F\text{-}\mathrm{Hol} (\PP, T, X/K)$ (resp. $F\text{-}\mathrm{Surhol} (\PP, T, X/K)$) la catégorie des 
$F\text{-}\D ^\dag _{\PP,\Q}$-modules holonomes (resp. $F\text{-}\D ^\dag _{\PP,\Q}$-modules surholonomes) $\E$ à support dans $X$ tels que le morphisme canonique $\E \to \E (\hdag T)$ soit un isomorphisme. 
Il résulte de \ref{holo=surholo-proj} que 
$F\text{-}\mathrm{Hol} (\PP, T, X/K)=F\text{-}\mathrm{Surhol} (\PP, T, X/K)$.
Or, d'après \cite[4.15]{caro_surholonome}, la catégorie $F\text{-}\mathrm{Surhol} (\PP, T, X/K)$ 
ne dépend à isomorphisme canonique près que de $Y/K$, i.e., ne dépend pas de tels $\PP$, $X$, $T$ tels que $Y= X \setminus T$.
Il en est alors de même de $F\text{-}\mathrm{Hol} (\PP, T, X/K)$. On la notera simplement
$F\text{-}\mathrm{Hol} (Y/K)$ ou $F\text{-}\mathrm{Surhol} (Y/K)$. Notons que l'on définit sans structure de Frobenius
$\mathrm{Surhol} (Y/K)$, alors que $\mathrm{Hol} (Y/K)$ n'a pas de sens a priori (car on ne sait pas si l'holonomie sans structure de Frobenius est stable). 
\end{nota}

\begin{rema}
\label{local-proj-plong}
Soit $Y$ une $k$-variété affine. 
Choisissons une immersion fermée de la forme
$Y \hookrightarrow \A ^n _k$. 
En notant $X$ l'adhérence de $Y$ dans $\P ^{n} _{k}$ et $T$ le diviseur de $\P ^n _k$
complémentaire de $\A ^n _k$, on obtient $Y =X \setminus T$.
Les hypothèses de \ref{defi-hol-PTX} sont donc satisfaites pour les schémas affines.
\end{rema}

\begin{vide}
[$F$-$\D$-modules holonomes sur une $k$-variété]

Soit $Y$ une variété sur $k$.  
Lorsqu'il n'existe pas de $\V$-schéma formel $\PP$ projectif et lisse, de diviseur $T$ de $P$, de sous-schéma fermé $X$ de $P$ tels que $Y =X \setminus T$, 
d'après \cite[5.2]{caro_surholonome}, il est encore possible de construire la catégorie $F\text{-}\mathrm{Surhol} (Y /K)$ en procédant par recollement (la situation est localement vérifiée grâce à \ref{local-proj-plong}).
Avec cette remarque, on définit alors la catégorie $F\text{-}\mathrm{Hol} (Y /K)$ de manière analogue et on obtient 
$F\text{-}\mathrm{Hol} (Y /K)=F\text{-}\mathrm{Surhol} (Y /K)$.

Pour la commodité du lecteur, donnons brièvement sa construction : 
avec \ref{local-proj-plong}, choisissons un recouvrement fini ouvert $(Y _\alpha) _{\alpha \in \Lambda}$ de $Y$ tel que,
pour tout $\alpha\in \Lambda$, il existe
un $\V$-schéma formel projectif et lisse $\PP _\alpha$,
  un sous-schéma fermé $X _\alpha$ de
  $P _\alpha$ et un diviseur $T_\alpha$ de $P_\alpha$ tels que $Y _\alpha = X _\alpha\setminus T_\alpha$.
  Pour tous $\alpha, \beta$ et $\gamma \in \Lambda$,
  on note $p _1 ^{{\alpha \beta}}$ : $\PP _\alpha \times _\S \PP _\beta \rightarrow \PP _\alpha$ et
  $p _2 ^{{\alpha \beta}}$ : $\PP _\alpha \times _\S \PP _\beta \rightarrow \PP _\beta$
  les projections canoniques, $X _{{\alpha \beta}}$
  l'adhérence schématique de
  $Y _\alpha \cap Y _\beta$ dans $P _\alpha \times P _\beta$ (via l'immersion
  $Y _\alpha \cap Y _\beta \hookrightarrow Y _\alpha \times Y _\beta
  \hookrightarrow P _\alpha \times P _\beta$),
  $T _{{\alpha \beta}}=
 ( \smash{p _1 ^{\alpha \beta }}) ^{-1} (T _\alpha) \cup (\smash{p _2 ^{\alpha \beta }}) ^{-1} (T _\beta)$.

  Soient
  $j _1 ^{\alpha \beta} $ : $Y _{\alpha}  \cap Y _{\beta} \subset Y _\alpha$,
  $j _2 ^{\alpha \beta} $ : $Y _{\alpha}  \cap Y _{\beta} \subset Y _\beta$,
  $j _{12} ^{\alpha \beta \gamma} $ : $Y _{\alpha}  \cap Y _{\beta}\cap Y _{\gamma} \subset
  Y _{\alpha}  \cap Y _{\beta}$,
  $j _{23} ^{\alpha \beta \gamma} $ :
  $Y _{\alpha}  \cap Y _{\beta}\cap Y _{\gamma}\subset Y _\beta \cap Y _\gamma$
  et
  $j _{13} ^{\alpha \beta \gamma} $ : $Y _{\alpha } \cap Y _{\beta}  \cap Y _{\gamma}
  \subset
  Y _\alpha \cap Y _\gamma$
  les immersions ouvertes.
Pour $i=1,2$, posons $j _i ^{\alpha \beta !}:=\R \underline{\Gamma} ^\dag _{X _{{\alpha \beta}}}
\circ (\hdag T _{{\alpha \beta}})\circ p _{i} ^{\alpha \beta !} $.
On définit de même les foncteurs $j _{12}^{\alpha \beta \gamma !} $, $j _{13}^{\alpha \beta \gamma !} $, $j _{23}^{\alpha \beta \gamma !} $.

  On définit la catégorie $F\text{-}\mathrm{Hol} (Y,\, (Y _\alpha, \PP _\alpha, T_\alpha,  X_\alpha) _{\alpha \in \Lambda}/K)$
  de la façon suivante :
\begin{itemize}
\item   Un objet est constitué par la donnée, pour tout $\alpha \in \Lambda$,
  d'un objet $\E _\alpha$ de
  $F\text{-}\mathrm{Hol} (\PP _\alpha, T _\alpha, X _\alpha /K)$ et,
  pour tous $\alpha,\beta \in \Lambda$,
  d'un isomorphisme dans
  $F\text{-}\mathrm{Hol} (\PP _\alpha \times \PP _\beta , T _{\alpha \beta}, X _{\alpha \beta}/K)$,
  $\theta _{\alpha \beta}$ :
  $j _2 ^{\alpha \beta !} (\E _\beta)
  \riso j _1 ^{\alpha \beta !} (\E _\alpha)$ ;
ces isomorphismes vérifiant 
la condition de cocycle
$j _{13}
^{\alpha \beta \gamma !} (\theta _{  \alpha \gamma} )=
j _{12} ^{\alpha \beta \gamma !} (\theta _{  \alpha \beta} )
\circ j _{23} ^{\alpha \beta \gamma !} ( \theta _{ \beta \gamma })$.

La famille d'isomorphismes $(\theta _{\alpha \beta}) _{\alpha ,\beta \in \Lambda}$
est appelée {\it donnée de recollement} de $ (\E _\alpha) _{\alpha \in \Lambda} $.

\item Les flèches
$ ((\E _\alpha) _{\alpha \in \Lambda}, (\theta _{  \alpha \gamma}) _{\alpha,\beta  \in \Lambda}) \rightarrow
  ((\E '_\alpha) _{\alpha \in \Lambda}, (\theta '_{  \alpha \gamma}) _{\alpha,\beta  \in \Lambda})$
de la catégorie
$F\text{-}\mathrm{Hol}
  (Y,\, (Y _\alpha, \PP _\alpha, T_\alpha,  X_\alpha) _{\alpha \in \Lambda}/K)$
  sont les familles de morphismes
  $ \E _\alpha \rightarrow   \E '_\alpha$
  commutant aux données de recollement respectives.
\end{itemize}

La catégorie
$F\text{-}\mathrm{Hol}
  (Y,\, (Y _\alpha, \PP _\alpha, T_\alpha,  X_\alpha) _{\alpha \in \Lambda}/K)$
  ne dépend pas du choix de la famille
$(Y _\alpha, \PP _\alpha, T_\alpha,  X_\alpha) _{\alpha \in \Lambda}$
et se notera
$F\text{-}\mathrm{Hol} (Y /K)$.
Ses objets sont les $F$-$\D$-modules arithmétiques holonomes sur $Y$.

\end{vide}

\begin{vide}
[Complexes à cohomologie bornée et $F$-holonome de $\D$-modules arithmétiques sur une $k$-variété quasi-projective]
Soit $Y$ une variété quasi-projective sur $k$. 
Il existe alors un $\V$-schéma formel $\PP$ projectif et lisse, deux sous-schémas fermés $T$, $X$ de $P$ tels que $Y = X \setminus T$.
 On note $F\text{-}D ^\mathrm{b} _\mathrm{hol}  (\PP, T, X/K)$ (resp. $(F\text{-})D ^\mathrm{b} _\mathrm{surhol}  (\PP, T, X/K)$) la sous-catégorie pleine 
 de $F\text{-}D ^\mathrm{b} _\mathrm{hol} (\D ^\dag _{\PP,\Q})$ 
 (resp. $(F\text{-})D ^\mathrm{b} _\mathrm{surhol} (\D ^\dag _{\PP,\Q})$)
  des complexes $\E$  à support dans $X$ tels que le morphisme $\E \to \E (\hdag T)$ canonique soit un isomorphisme.
Or, d'après \cite[4.18]{caro_surholonome}, la catégorie $(F\text{-})D ^\mathrm{b} _\mathrm{surhol}  (\PP, T, X/K)$ est indépendante, à isomorphisme canonique près, d'un tel choix de $\PP$, $X$, $T$ tels que $Y= X \setminus T$. On la note $(F\text{-})D ^\mathrm{b} _\mathrm{surhol}  (Y/K)$.
D'après \ref{holo=surholo-proj}, on obtient
 $F\text{-}D ^\mathrm{b} _\mathrm{hol}  (\PP, T, X/K)=F\text{-}D ^\mathrm{b} _\mathrm{surhol}  (\PP, T, X/K)$.
 On note donc simplement $F\text{-}D ^\mathrm{b} _\mathrm{hol}  (Y/K)$ à la place de $F\text{-}D ^\mathrm{b} _\mathrm{hol}  (\PP, T, X/K)$. 
 Ainsi, $F\text{-}D ^\mathrm{b} _\mathrm{hol}  (Y/K)=F\text{-}D ^\mathrm{b} _\mathrm{surhol}  (Y/K)$. On désigne par $D ^\mathrm{b} _\mathrm{F\text{-}hol}  (Y/K)$ l'image essentielle 
 du foncteur oubli $F\text{-}D ^\mathrm{b} _\mathrm{hol}  (Y/K) \to D ^\mathrm{b} _\mathrm{surhol}  (Y/K)$.
\end{vide}

 \begin{vide}
[Opérations cohomologiques sur les $k$-variétés quasi-projectives]
Soit $b\,:\, Y' \to Y$ un morphisme de variétés quasi-projectives sur $k$. 
Par \cite[4.19]{caro_surholonome} (il y a une faute de frappe, il faut d'après la remarque \cite[4.22]{caro_surholonome} enlever a priori la structure de Frobenius), 
on dispose alors des foncteurs 
$b _{+},b _{!}\,:\, D ^\mathrm{b} _\mathrm{surhol}  (Y'/K) \to D ^\mathrm{b} _\mathrm{surhol}  (Y/K)$
appelés respectivement image directe et image directe extraordinaire par $b$.
On en déduit la factorisation: 
$b _{+},b _{!}\,:\, D ^\mathrm{b} _\mathrm{F\text{-}hol}  (Y'/K) \to D ^\mathrm{b} _\mathrm{F\text{-}hol}  (Y/K)$.
De même, avec \cite[4.19]{caro_surholonome}, \cite[4.22]{caro_surholonome}, \cite{caro-Tsuzuki-2008} (ou \cite{caro-stab-prod-tens}),
on dispose des foncteurs 
$b ^{+},b ^{!}\,:\, D ^\mathrm{b} _\mathrm{F\text{-}hol}  (Y/K) \to D ^\mathrm{b} _\mathrm{F\text{-}hol}  (Y'/K)$
appelés respectivement image inverse et image inverse extraordinaire par $b$,
du foncteur dual $\DD _{Y}\,:\, D ^\mathrm{b} _\mathrm{F\text{-}hol}  (Y/K) \to D ^\mathrm{b} _\mathrm{F\text{-}hol}  (Y/K)$
et du produit tensoriel $-\otimes _{\O _{Y}}-\,:\, D ^\mathrm{b} _\mathrm{F\text{-}hol}  (Y/K)\times D ^\mathrm{b} _\mathrm{F\text{-}hol}  (Y/K) \to D ^\mathrm{b} _\mathrm{F\text{-}hol}  (Y/K)$.
\end{vide}

\bibliographystyle{smfalpha}

\providecommand{\bysame}{\leavevmode ---\ }
\providecommand{\og}{``}
\providecommand{\fg}{''}
\providecommand{\smfandname}{et}
\providecommand{\smfedsname}{\'eds.}
\providecommand{\smfedname}{\'ed.}
\providecommand{\smfmastersthesisname}{M\'emoire}
\providecommand{\smfphdthesisname}{Th\`ese}

\bigskip
\noindent Daniel Caro\\
Laboratoire de Mathématiques Nicolas Oresme\\
Université de Caen
Campus 2\\
14032 Caen Cedex\\
France.\\
email: daniel.caro@math.unicaen.fr

\end{document}